\documentclass[12pt, reqno]{amsart}

\usepackage{amssymb,amsmath,amsthm,graphicx,enumitem,systeme, array,enumitem,commath,mathtools,multicol,url}
\usepackage{setspace,blindtext}
\usepackage{varioref}
\usepackage[colorlinks=true,linkcolor=blue]{hyperref}
\usepackage[capitalise]{cleveref}
\usepackage{blindtext}
\usepackage{fancyhdr}
\usepackage{pgfplots}
\usepackage{geometry}
\usepackage{mathabx}

\usetikzlibrary{calc,angles,quotes}

\usepackage{xcolor}
\usepackage{caption}
\allowdisplaybreaks 

\theoremstyle{plain}
\newtheorem{theorem}{Theorem}[section]
\newtheorem{lemma}[theorem]{Lemma}
\newtheorem{corollary}[theorem]{Corollary}

\theoremstyle{definition}
\newtheorem{definition}[theorem]{Definition}
\newtheorem{example}[theorem]{Example}
\newtheorem{remark}[theorem]{Remark}

\title[Frullani-type integrals involving cosines]{Convergence criteria for Frullani-type integrals involving differences of cosines}

\author{Atiratch Laoharenoo}
\address{Department of Mathematics and Computer Science, Kamnoetvidya Science Academy, Rayong 21210, Thailand}
\email{atiratch.l@kvis.ac.th}

\author{Chanatip Sujsuntinukul}
\address{Department of Mathematics, The University of Hong Kong, Pokfulam, Hong Kong}
\email{chanatip@connect.hku.hk}

\subjclass[2020]{05A19, 26A06, 26A09}

\keywords{Frullani integral, improper integral, cosine function, sine function, Abel's summation, combinatorial identity}

\begin{document}

\maketitle

\begin{abstract}
For $p,q\in\mathbb{N}$ and $\alpha,\beta\in\mathbb{R}$, we investigate the family of improper integrals
 \[\int_0^\infty\frac{(\cos\alpha x-\cos\beta x)^p}{x^q}dx.\] 
We establish a complete classification of the parameter ranges $(p, q; \alpha, \beta)$ for which the integrals converge or diverge, and we derive explicit closed-form evaluations in all convergent cases. The analysis also reveals a family of combinatorial identities arising naturally from  coefficients in the trigonometric power expansions. As a further application of the same method, we study an analogous class of integrals involving powers of sine differences. This extends the work of Laoharenoo and Boonklurb in 2022.
\end{abstract}

\section{Introduction}

In the early 19th century, an Italian mathematician Frullani introduced an interesting integral which relates a ``difference under scaling'' to a logarithmic factor \cite{AR}. In its standard form, if $f:[0,\infty)\to\mathbb{R}$ is continuous, has a finite limit at infinity $f(\infty):=\lim_{x\to\infty}f(x)$, and $\alpha, \beta>0$, then (assuming convergence) one has
\[
\int_{0}^{\infty}\frac{f(\alpha x)-f(\beta x)}{x}dx=\bigl(f(\infty)-f(0)\bigr)\log\frac{\alpha}{\beta}.
\]
If $f(\infty)$ does not exist, but $\int_{\gamma}^{\infty}(f(x)/x)dx$ exists for some $\gamma>0$, then one has the alternative form
\[
\int_{0}^{\infty}\frac{f(\alpha x)-f(\beta x)}{x}dx=-f(0)\log\frac{\alpha}{\beta}.
\]

This identity has served as a prototype for many extensions in which one seeks either broader hypotheses guaranteeing convergence and evaluation, or explicit closed forms for particular choices of $f$. For instance, a notable direction replaces pointwise information about $f(x)$ as $x\to\infty$ by averaged information. In 1949, Ostrowski \cite{OS} introduced the mean value
\begin{align}\label{mean_rewrite}
M(f):=\lim_{x\to\infty}\frac1x\int_0^x f(t)\,dt,
\end{align}
and showed that, under suitable assumptions ensuring both existence and convergence, the same Frullani integral can be expressed in terms of $M(f)$ via
\[
\int_0^\infty \frac{f(\alpha x)-f(\beta x)}{x}\,dx = M(f)\log\frac{\alpha}{\beta}.
\]
These integrals of Frullani-type have, in fact, attracted the interest of many mathematicians (including Ramanujan and Hardy, among others) during the mid 20th century and in recent years. See, for example, \cite{JB}, \cite{AJP}, \cite{ARP}, \cite{BS}, \cite{RM}. One recurring theme is understanding how oscillatory structure (such as taking $f$ to be sine or cosine functions) and modified powers in the numerator or denominator affect both convergence and the possibility of closed-form evaluation.

More recently, Laoharenoo and Boonklurb \cite{AL}, in 2022, introduced a family of Frullani-type integrals built from trigonometric oscillations,
\begin{align}\label{main}
    \mathcal{I}_{p, q}(\alpha, \beta):=\int_0^\infty f_{p, q}[\alpha, \beta](x)\,dx,
\end{align}
where \[f_{p, q}[\alpha, \beta](x):=\frac{(\cos \alpha x-\cos \beta x)^p}{x^q}.\]
In this case, we allow $p,q\in\mathbb{N}$ and $\alpha, \beta\in\mathbb{R}$, $|\alpha|\ne|\beta|$ (Indeed, since the cosine function is even, it suffices to consider when $\alpha, \beta\ge 0$.).
The special case $p=q=2$ appears (in a different context) in Hardy’s work \cite{GHH}. In addition, not so long ago, a similar class of such improper integrals has also been studied using the Mellin transform \cite{maag}. Beyond these connections, the general family seems not to have been systematically analysed, and in particular it is not recorded in the large compendium \cite{II}.

These integrals appear to be somewhat challenging to study. First, they exhibit singular behaviour at both endpoints of integration: the integrand has a potential singularity at $x=0$, whilst the upper limit extends to infinity. Establishing convergence thus requires careful analysis of both singularities. Moreover, extracting explicit values is nontrivial, due to the complexity of the oscillatory terms involved.
In \cite{AL}, they were able to compute the exact values of \eqref{main} for the cases where $p\ge q$ and $q$ is even. However, the remaining cases are yet to be investigated.

Based on the discussion above, this paper provides a complete analysis of the family of integrals in \eqref{main}. After collecting the necessary preliminaries in Section~\ref{sec2}, we first give a full classification of convergence in Section~\ref{sec3}. Owing to the atypical singular behaviour of \(f_{p,q}[\alpha,\beta]\), the treatment of the case \(q=1\) differs somewhat from that of \(q\ge 2\). Consequently, in Section~\ref{sec4}, we derive explicit closed-form values for \(\mathcal{I}_{p,q}(\alpha,\beta)\) in all convergent cases. In Section~\ref{sec5}, we will look at the special case when $\alpha=0$. Interestingly, these values turn out to exhibit some interesting combinatorial patterns. Finally, in Section \ref{sec6}, building on the same theory, we also roughly study the convergence classification of the family similar to \eqref{main}, but the cosine is replaced by the sine.

\section{Preliminaries}\label{sec2}



In this section, we collect several lemmas that will be important to our analysis later on. The first two lemmas can be found in the literature.


\begin{lemma}\cite{II}\label{basic1}
    Let $\alpha, \beta, \gamma\in\mathbb{R}$ such that  $\alpha, \beta\ne 0$. Then we have
    \[\int_0^{\infty}\frac{\cos\alpha x-\cos\beta x}{x}dx=\log\left|\frac{\beta}{\alpha}\right|\quad\text{and}\quad\int_0^{\infty}\frac{\sin \gamma x}{x}dx=\begin{cases}
        \displaystyle\frac{\pi}{2}\text{sgn}(\gamma) &\text{if } \gamma\ne 0,\\
        0 &\text{if } \gamma=0.\end{cases}\]
        In this case, for  $t\in\mathbb{R}$, we define the signum function by
    \[\text{sgn}(t):=\begin{cases}
        1 &\text{if } t>0,\\
        0 &\text{if } t=0,\\
        -1 &\text{if } t<0.
    \end{cases}\]
\end{lemma}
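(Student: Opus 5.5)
We prove the two identities separately, starting with the sine (Dirichlet) integral and deducing the cosine one from a Frullani-type argument.

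\textbf{The sine integral.} For $\gamma=0$ the integrand is identically zero, so there is nothing to prove. For $\gamma\neq 0$ we use that $\sin$ is odd and substitute $u=|\gamma|x$. This gives
\[
\int_0^\infty\frac{\sin\gamma x}{x}\,dx=\text{sgn}(\gamma)\int_0^\infty\frac{\sin u}{u}\,du,
\]
so it suffices to show $\int_0^\infty \frac{\sin u}{u}\,du=\frac{\pi}{2}$.

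\emph{Convergence.} Near $0$ the integrand is bounded, since $\sin u/u\to 1$. On $[1,\infty)$ we integrate by parts:
\[
\int_1^X\frac{\sin u}{u}\,du=\cos 1-\frac{\cos X}{X}-\int_1^X\frac{\cos u}{u^2}\,du,
\]
and the last integral converges absolutely as $X\to\infty$.

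\emph{Value.} Set $F(t)=\int_0^\infty e^{-tu}\frac{\sin u}{u}\,du$ for $t\geq 0$. For $t>0$, differentiation under the integral sign is justified by domination, and it gives $F'(t)=-\int_0^\infty e^{-tu}\sin u\,du=-\frac{1}{1+t^2}$. Since $F(t)\to 0$ as $t\to\infty$ (because $|\sin u/u|\le 1$ and $\int_0^\infty e^{-tu}\,du=1/t$), we get $F(t)=\frac{\pi}{2}-\arctan t$ for $t>0$.

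It remains to show that $F$ is continuous at $t=0^+$. This is the delicate point, because the integral is only conditionally convergent. I would handle it with Abel's test for integrals: the integral $\int_0^\infty \frac{\sin u}{u}\,du$ converges, and $e^{-tu}$ is monotone in $u$ and bounded by $1$ uniformly in $t\ge0$. Hence the tails $\int_X^\infty e^{-tu}\frac{\sin u}{u}\,du$ are uniformly small in $t\geq 0$ (via the second mean value theorem), which gives the continuity. Therefore $F(0)=\frac{\pi}{2}$.

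\textbf{The cosine integral.} By evenness we may assume $\alpha,\beta>0$. Fix $0<\varepsilon<X$ and split the integral into its two terms. The substitution $u=\alpha x$ in the first term and $u=\beta x$ in the second gives
\[
\int_\varepsilon^X\frac{\cos\alpha x-\cos\beta x}{x}\,dx=\int_{\alpha\varepsilon}^{\beta\varepsilon}\frac{\cos u}{u}\,du-\int_{\alpha X}^{\beta X}\frac{\cos u}{u}\,du.
\]

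\emph{The term as $X\to\infty$.} Integration by parts, exactly as in the sine case, shows that $\int_1^\infty \frac{\cos u}{u}\,du$ converges. Its tails therefore tend to $0$, so the second integral tends to $0$.

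\emph{The term as $\varepsilon\to 0$.} Write $\cos u=1-(1-\cos u)$. The first integral becomes
\[
\int_{\alpha\varepsilon}^{\beta\varepsilon}\frac{du}{u}-\int_{\alpha\varepsilon}^{\beta\varepsilon}\frac{1-\cos u}{u}\,du=\log\frac{\beta}{\alpha}-\int_{\alpha\varepsilon}^{\beta\varepsilon}\frac{1-\cos u}{u}\,du.
\]
Since $\frac{1-\cos u}{u}\le u/2$, the remaining integral is $O(\varepsilon^2)$. Hence the first integral tends to $\log(\beta/\alpha)$.

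Letting $\varepsilon\to0$ and $X\to\infty$ yields $\log|\beta/\alpha|$, which is the claimed value. This is Frullani's formula with $f=\cos$ in the alternative form quoted in the introduction, since $f(0)=1$.
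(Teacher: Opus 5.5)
Your proof is correct and complete, and it takes a different route from the paper only because the paper gives no argument: it simply quotes both identities from the table of Gradshteyn--Ryzhik \cite{II}.

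For the Dirichlet integral you use the Laplace-parameter function $F(t)=\int_0^\infty e^{-tu}\frac{\sin u}{u}\,du$, which is the standard route. You correctly isolate the only delicate step, continuity at $t=0^+$. Your Bonnet/Abel argument handles it, because the tail bound
\[
\Bigl|\int_X^\infty e^{-tu}\frac{\sin u}{u}\,du\Bigr|\le\sup_{\xi\ge X}\Bigl|\int_X^\xi\frac{\sin u}{u}\,du\Bigr|
\]
is uniform in $t\ge 0$.

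For the cosine identity, you reduce $\int_\varepsilon^X$ to an integral near $0$ plus one near $\infty$. This shows that the limits $\varepsilon\to0$ and $X\to\infty$ can be taken independently, so convergence and the value come out together. It is exactly the proof of the ``alternative'' Frullani formula from the introduction, with $f=\cos$ and $f(0)=1$.

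The citation buys the paper brevity. Your derivation buys self-containedness, and it makes visible that the first identity is Frullani's theorem itself. That identity is the building block on which the Abel-summation reductions in Theorems \ref{thm3.1} and \ref{compuq=1} rest.
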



\begin{lemma}[\cite{AL}, Lemma 4.2]\label{l4}
		Let $f : \mathbb{R}\to\mathbb{R}$ be a smooth function for which $x=0$ is a zero of order $k$ for some $k\geq 2$. Suppose there exists an integer $1\le \ell\leq k$ such that $\int_0^\infty\frac{f^{(\ell-1)}(x)}{x}dx$ exists, and $f^{(m)}$ is bounded on $\mathbb{R}$ for $0 \leq m \leq \ell-2$. Then
		\[\int_0^\infty\frac{f(x)}{x^{\ell}}dx=\frac{1}{(\ell-1)!}\int_0^\infty\frac{f^{(\ell-1)}(x)}{x}dx.\]
\end{lemma}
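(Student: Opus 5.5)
Since $x=0$ is a zero of order $k\ge 2$, the function $f(x)/x^{j}$ extends smoothly to $[0,\infty)$ for every $j\le k$. I will prove the identity by induction on $\ell$, integrating by parts once at each step so that the power of $x$ in the denominator drops by one and the order of the derivative rises by one. The case $\ell=1$ is a tautology. For the induction step I would prove the one-step reduction
\[
\int_0^\infty\frac{g(x)}{x^{j}}\,dx=\frac{1}{j-1}\int_0^\infty\frac{g'(x)}{x^{j-1}}\,dx \qquad (2\le j),
\]
valid whenever $g$ is smooth, bounded, vanishes at $0$ to order at least $j$, and the right-hand integral exists. Applying it successively with $g=f, f', \dots, f^{(\ell-2)}$ and $j=\ell,\ell-1,\dots,2$ produces the factor $\frac{1}{(\ell-1)(\ell-2)\cdots 1}=\frac{1}{(\ell-1)!}$. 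The hypotheses transfer along the chain: $f^{(m)}$ vanishes at $0$ to order $k-m\ge \ell-m$, and $f^{(m)}$ is bounded for $m\le \ell-2$. Convergence of each intermediate right-hand side is obtained from the next step, by running the chain backwards starting from the assumed existence of $\int_0^\infty f^{(\ell-1)}(x)/x\,dx$.

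For the one-step reduction, I work on $[\varepsilon,R]$. Integrating by parts with $u=g$ and $dv=x^{-j}dx$ gives
\[
\int_\varepsilon^R\frac{g}{x^{j}}\,dx=\left[-\frac{g(x)}{(j-1)x^{j-1}}\right]_\varepsilon^R+\frac{1}{j-1}\int_\varepsilon^R\frac{g'(x)}{x^{j-1}}\,dx .
\]
The boundary term at $R$ tends to $0$ because $g$ is bounded and $j-1\ge 1$. At $\varepsilon$, the term $g(\varepsilon)/\varepsilon^{j-1}=O(\varepsilon)$ because $g$ vanishes to order at least $j$. Letting $\varepsilon\to0$ and $R\to\infty$ therefore shows that the left side converges exactly when the right side does, and that the two agree.

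The main point needing care is the ordering of the convergence claims. The existence hypothesis is given only for the last integral, $\int f^{(\ell-1)}/x$, so I will argue from that end. The step with $j=2$ and $g=f^{(\ell-2)}$ shows that $\int f^{(\ell-2)}/x^2$ exists. This feeds into the step with $j=3$, and so on up to $\int f/x^\ell$. The equivalence of convergence in each step is what makes this backward propagation work. At $0$ no issue arises, since every integrand $f^{(m)}(x)/x^{\ell-m}$ is continuous there by the vanishing order, and $k\ge\ell$ guarantees this even for $m=0$.
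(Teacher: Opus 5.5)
The paper does not prove this lemma; it quotes it from Laoharenoo--Boonklurb (their Lemma 4.2), so there is no in-paper argument to compare yours against. Your proof is correct and complete.

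In each integration by parts on $[\varepsilon,R]$:
\begin{itemize}
\item the boundary term at $R$ vanishes because $f^{(m)}$ is bounded for $m\le \ell-2$, which is why no bound on $f^{(\ell-1)}$ is needed;
\item the boundary term at $\varepsilon$ is $O(\varepsilon)$ because $f^{(m)}$ vanishes at $0$ to order $k-m\ge \ell-m$.
\end{itemize}
Propagating convergence backward from the assumed existence of $\int_0^\infty f^{(\ell-1)}(x)/x\,dx$ correctly handles the fact that existence is hypothesised only for the last integral in the chain.
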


The next three lemmas can be shown easily.

\begin{lemma}\label{basiclem}
    Let $\alpha\in[0, 1]$ and $x\in[0, \pi/2]$. Then we have
    \begin{align}\label{inequa}
        x\sin x+2\cos x\le \alpha x\sin\alpha x+2\cos\alpha x.
    \end{align}
\end{lemma}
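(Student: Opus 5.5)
Fix $x\in[0,\pi/2]$ and consider the auxiliary function $g(t):=t\sin t+2\cos t$ for $t\in[0,\pi/2]$. Since $\alpha\in[0,1]$ and $x\in[0,\pi/2]$, both $\alpha x$ and $x$ lie in $[0,\pi/2]$ and $\alpha x\le x$. Inequality \eqref{inequa} reads $g(x)\le g(\alpha x)$, so it suffices to show that $g$ is non-increasing on $[0,\pi/2]$.

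We differentiate:
\[
g'(t)=\sin t+t\cos t-2\sin t=t\cos t-\sin t .
\]
Next set $h(t):=t\cos t-\sin t$. Then $h(0)=0$ and $h'(t)=\cos t-t\sin t-\cos t=-t\sin t\le 0$ on $[0,\pi/2]$. Hence $h(t)\le h(0)=0$, i.e.\ $g'(t)\le 0$ on $[0,\pi/2]$. It follows that $g$ is non-increasing there, and since $0\le \alpha x\le x\le\pi/2$ we get $g(x)\le g(\alpha x)$, which is exactly \eqref{inequa}.

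There is no real obstacle here. The only point needing care is the sign of $t\cos t-\sin t$, and the second derivative argument above handles it. Equivalently, one can note that $\tan t\ge t$ on $[0,\pi/2)$ and treat the endpoint $t=\pi/2$ separately, where $h(\pi/2)=-1$. The boundary cases $\alpha=0$, which gives $g(x)\le g(0)=2$, and $\alpha=1$, which gives equality, are covered automatically.
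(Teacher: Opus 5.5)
Your proof is correct and is essentially the paper's argument. The paper differentiates $g_x(\alpha)=g(\alpha x)$ in $\alpha$ to get $x(\alpha x\cos\alpha x-\sin\alpha x)\le 0$, using the same inequality $t\cos t\le\sin t$ on $[0,\pi/2]$, which it obtains from the mean value theorem; you differentiate $g$ in $t$ directly, prove that inequality via $h'(t)=-t\sin t\le 0$, and so avoid the paper's separate continuity step for $\alpha=0$.
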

\begin{proof}
First we record the elementary inequality\footnote{This can be proven using the mean value theorem.} $x\cos x\le\sin x$ for $x\in[0, \pi/2]$. Now fix $x\in[0, \pi/2]$ and let \[g_x(\alpha):=2\cos\alpha x+\alpha x\sin\alpha x\] for $\alpha\in(0, 1]$. Then
$g_x'(\alpha)=x(\alpha x\cos \alpha x-\sin\alpha x)\le 0$,
since $\alpha x\in[0, \pi/2]$, showing $g_x$ is decreasing on $\alpha\in(0, 1]$. Hence \eqref{inequa} holds for $\alpha\in(0, 1]$. For $\alpha=0$, we just let $\alpha\to 0^+$ and apply continuity.
\end{proof}

\begin{lemma}\label{lem.dec}  Let $0\le \alpha\le 1$ and $p, q\in\mathbb{N}$ with $q>2p>0$. Then $f_{p, q}[\alpha, 1](x)$ is positive and decreasing on $(0, \pi/2]$.\end{lemma}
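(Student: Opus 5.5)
Positivity is immediate. For $x\in(0,\pi/2]$ and $0\le\alpha\le1$ we have $0\le\alpha x\le x\le\pi/2$, and cosine is strictly decreasing on $[0,\pi/2]$. So $\cos\alpha x-\cos x>0$ whenever $\alpha<1$; the case $\alpha=1$ is excluded by $|\alpha|\ne|\beta|$. Hence $f_{p,q}[\alpha,1](x)=(\cos\alpha x-\cos x)^p/x^q>0$.

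For monotonicity I will differentiate and show $f'\le 0$. Writing $h(x):=\cos\alpha x-\cos x>0$, we get
\[
f_{p,q}[\alpha,1]'(x)=\frac{h(x)^{p-1}}{x^{q+1}}\bigl(p\,x\,h'(x)-q\,h(x)\bigr).
\]
It therefore suffices to show $p\,x h'(x)\le q\,h(x)$. Since $q>2p$ and $h>0$, we have $q\,h>2p\,h$, so it is enough to prove $x h'(x)\le 2h(x)$, i.e.
\[
x(\sin x-\alpha\sin\alpha x)\le 2(\cos\alpha x-\cos x).
\]
Rearranging, this is exactly $x\sin x+2\cos x\le \alpha x\sin\alpha x+2\cos\alpha x$, which is Lemma~\ref{basiclem}. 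This gives $p\,xh'\le 2p\,h<q\,h$, so $f'<0$ on $(0,\pi/2]$, and $f$ is strictly decreasing there.

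The only real step is the reduction to Lemma~\ref{basiclem}. That reduction uses the strict inequality $q>2p$ together with $h>0$, which absorbs the factor $p$. The case $\alpha=0$ needs no separate treatment, since Lemma~\ref{basiclem} already includes it. The sign of $h'$ does not matter: if $h'\le 0$ the required inequality holds trivially, and otherwise the lemma handles it. So I expect no genuine obstacle.
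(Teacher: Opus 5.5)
Your proof is correct and is essentially the paper's argument. The paper factors $f_{p,q}[\alpha,1]=g^p\cdot x^{2p-q}$ with $g(x)=(\cos\alpha x-\cos x)/x^2$ and uses Lemma~\ref{basiclem} to get $g'\le 0$, which is exactly your inequality $xh'\le 2h$; you fold that factorisation into one derivative computation, which also gives strict monotonicity, and you rightly note that $\alpha=1$ must be excluded for positivity, a point the paper's statement glosses over.
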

\begin{proof}
    First we see that the function
    \[g(x):=\frac{\cos\alpha x-\cos x}{x^2}\]
    is decreasing on $(0, \pi/2]$ due to Lemma \ref{basiclem} and
    \[g'(x)=\frac{1}{x^3}(x\sin x+2\cos x-\alpha x\sin \alpha x-2\cos\alpha x)\le 0.\]
    Since $g(x)\ge 0$ for $x\in(0, \pi/2]$, thus implies $g^p$ is decreasing on $(0, \pi/2]$. Next as the function $1/x^{q-2p}$ is positive and decreasing on $(0, \pi/2]$, the product
    \[\frac{g^p(x)}{x^{q-2p}}=\frac{(\cos\alpha x-\cos x)^p}{x^q}\]
    is positive and decreasing on $(0, \pi/2]$ as desired.
\end{proof}

\begin{remark}
    Although we primarily care about $p, q\in\mathbb{N}$,  the result in fact holds for  $p, q\in\mathbb{R}$.
\end{remark}

\begin{lemma}\label{boundary0}
    Let $\alpha, \beta\in\mathbb{R}$ such that $|\alpha|\ne|\beta|$. If $p, q\in\mathbb{N}$ with $q\le 2p$, then we have\footnote{Here and in the sequel, we use the usual Landau's asymptotic notation $X=O(Y)$ to denote the bound $|X|\le CY$ for some absolute constant $C$.} $f_{p, q}[\alpha, \beta](x)=O(1)$ as $x\to 0$.
\end{lemma}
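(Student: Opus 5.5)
The idea is to bound the numerator by a constant multiple of $x^2$ near the origin, using the elementary inequality $|\cos a - \cos b|\le \tfrac12|a^2-b^2|$. This inequality follows from the mean value theorem applied to $t\mapsto\cos\sqrt{t}$, or from the product formula $\cos a-\cos b=-2\sin\frac{a+b}{2}\sin\frac{a-b}{2}$ together with $|\sin u|\le|u|$. Both routes give
\[
|\cos\alpha x-\cos\beta x| = 2\left|\sin\frac{(\alpha+\beta)x}{2}\right|\left|\sin\frac{(\alpha-\beta)x}{2}\right|\le \frac{|\alpha^2-\beta^2|}{2}\,x^2
\]
for all real $x$.

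Raising this to the $p$-th power, we get, for $0<|x|\le 1$,
\[
|f_{p,q}[\alpha,\beta](x)|\le \left(\frac{|\alpha^2-\beta^2|}{2}\right)^p |x|^{2p-q}\le \left(\frac{|\alpha^2-\beta^2|}{2}\right)^p .
\]
The last step uses the hypothesis $q\le 2p$: the exponent $2p-q$ is nonnegative, so $|x|^{2p-q}\le 1$ whenever $|x|\le 1$. This gives $f_{p,q}[\alpha,\beta](x)=O(1)$ as $x\to0$.

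The implied constant depends only on $\alpha,\beta,p$, so it is absolute once the parameters are fixed, as in the footnote's convention. The hypothesis $|\alpha|\ne|\beta|$ is not actually needed for the bound; it only guarantees that the constant is nonzero and that the setting is nontrivial. None of the steps is a real obstacle. The only point needing care is the direction of the exponent inequality, which is exactly where $q\le 2p$ enters.
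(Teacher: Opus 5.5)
Your proof is correct, and it takes a different route from the paper. The paper Taylor-expands $(\cos\alpha x-\cos\beta x)^p$ at $0$ as $\bigl(\tfrac{\beta^2-\alpha^2}{2}\bigr)^p x^{2p}+O(x^{2p+2})$. From this it reads off the limit of $f_{p,q}[\alpha,\beta](x)$ as $x\to0$: the limit is $\bigl(\tfrac{\beta^2-\alpha^2}{2}\bigr)^p$ when $q=2p$ and $0$ when $q<2p$.

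You instead combine the product-to-sum identity with $|\sin u|\le|u|$. This gives the inequality $|\cos\alpha x-\cos\beta x|\le\tfrac12|\alpha^2-\beta^2|\,x^2$ for every real $x$, not just asymptotically. As a result, your bound $|f_{p,q}[\alpha,\beta](x)|\le\bigl(\tfrac{|\alpha^2-\beta^2|}{2}\bigr)^p$ on $0<|x|\le1$ has an explicit constant, and there is no remainder term to control. That constant is in fact sharp, since it matches the leading Taylor coefficient.

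The paper's argument yields slightly more: it identifies the limit, so $f_{p,q}[\alpha,\beta]$ extends continuously to $x=0$, whereas yours gives only boundedness. The lemma is used only to get convergence of $\int_0^1 f_{p,q}[\alpha,\beta]$ in Theorem~\ref{compar}. There, boundedness of a function continuous on $(0,1]$ already suffices, so nothing is lost. Your remark that $|\alpha|\ne|\beta|$ is not needed for the bound is also correct.
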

\begin{proof}
This is obvious by considering the Taylor expansion around $x=0$ of
\begin{align*}
    (\cos \alpha x-\cos\beta x)^p=\left(\sum_{n=1}^{\infty}(-1)^{n+1}\frac{(\beta^{2n}-\alpha^{2n})}{(2n)!}x^{2n}\right)^p=\left(\frac{\beta^2-\alpha^2}{2}\right)^px^{2p}+O(x^{2p+2}).
\end{align*}
Hence, as $x\to 0$, $f_{p, q}[\alpha, \beta]\to((\beta^2-\alpha^2)/2)^p$ if $q=2p$ and $f_{p, q}[\alpha, \beta]\to0$ if $q<2p$.
\end{proof}

\section{Convergence Classification}\label{sec3}

We first classify the convergence of $\mathcal{I}_{p, q}(\alpha, \beta)$ in the case $q=1$, which appears to be the most delicate. Our result depends on the parity of $p$.

\subsection{Case $q=1$, even $p$}




\begin{theorem}\label{thm3.1}
Let $\alpha, \beta\in\mathbb{R}$ with $|\alpha|\ne|\beta|$. If $p\in\mathbb{N}$ is even, then $\mathcal{I}_{p, 1}(\alpha, \beta)$ diverges.
\end{theorem}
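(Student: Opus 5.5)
Near $0$ there is nothing to check: by Lemma~\ref{boundary0} (with $q=1\le 2p$) the integrand is $O(1)$ as $x\to 0$, so $\int_0^1 f_{p,1}[\alpha,\beta](x)\,dx$ converges. The divergence must therefore come from infinity, and the plan is to show $\int_1^\infty (\cos\alpha x-\cos\beta x)^p/x\,dx=+\infty$. Since $p$ is even, write $p=2m$. The numerator $g(x):=(\cos\alpha x-\cos\beta x)^{p}$ is then nonnegative, so divergence of $\int_1^\infty g(x)/x\,dx$ means only that it is unbounded, and no cancellation can occur.

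\textbf{Step 1: expand into cosines.} Use $\cos\alpha x=(e^{i\alpha x}+e^{-i\alpha x})/2$ and the binomial theorem. This expresses $g(x)$ as a finite trigonometric polynomial
\[
g(x)=c_0+\sum_{j} c_j\cos(\omega_j x),
\]
where $\omega_j>0$ are distinct frequencies (integer combinations of $\alpha,\beta$) and $c_0$ is the constant term, i.e.\ the mean value $M(g)$ in the sense of \eqref{mean_rewrite}.

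\textbf{Step 2: the constant term is positive.} Since $g\ge 0$ is continuous, is not identically zero (because $|\alpha|\ne|\beta|$), and is almost periodic,
\[
c_0=\lim_{T\to\infty}\frac1T\int_0^T g\,dt>0.
\]
A cleaner way to see this avoids almost-periodicity theory. Each oscillatory term satisfies $\int_0^T\cos(\omega_j t)\,dt=O(1)$, so $\int_0^T g=c_0T+O(1)$. On the other hand, $g$ is bounded below by a positive constant on a set of positive density. For example, take the sets where $|\cos\beta x|<1/2$ and $|\cos\alpha x|>3/4$, or argue through a simple case analysis. Alternatively, bound
\[
c_0\ge \frac{1}{T}\int_0^T g\ge \Bigl(\frac1T\int_0^T(\cos\alpha x-\cos\beta x)^2\,dx\Bigr)^{m}
\]
by Jensen (or H\"older, since $t\mapsto t^m$ is convex). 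The inner average tends to $\tfrac12+\tfrac12=1$ when $\alpha,\beta\neq0$, and to $\tfrac12$ or $\tfrac32$-type positive values when one of them is $0$. This gives $c_0>0$ directly. The Jensen route is the one I would use.

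\textbf{Step 3: conclude divergence.} Split the integral as
\[
\int_1^X \frac{g}{x}\,dx=c_0\log X+\sum_j c_j\int_1^X\frac{\cos\omega_j x}{x}\,dx .
\]
The oscillatory integrals $\int_1^X\cos(\omega_j x)/x\,dx$ converge as $X\to\infty$ (Dirichlet test, or Lemma~\ref{basic1}), so they stay bounded. Since $c_0\log X\to\infty$, the integral $\mathcal{I}_{p,1}(\alpha,\beta)$ diverges.

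\textbf{Main obstacle.} The only real obstacle is Step 2: guaranteeing that the constant term does not vanish. If some combination of frequencies $\omega_j$ equals $0$, extra terms add to $c_0$. I would avoid tracking these resonances entirely by using the Jensen bound on averages, which never requires identifying $c_0$ explicitly.
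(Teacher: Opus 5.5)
Your proof is correct, but it establishes the key fact, that the constant term is nonzero, by a different and more economical route than the paper. The paper also reduces to $\beta=1$ and also needs $a_0\neq 0$. It proves this by contradiction: if $a_0=0$, then $\int_0^R(\cos\alpha x-\cos x)^p\,dx=\sum_{k\ge1}a_k\sin(d_kR)/d_k$ would stay bounded. It then shows this integral tends to $\infty$, using periodicity when $\alpha\in\mathbb{Q}$. When $\alpha\notin\mathbb{Q}$, it uses density of $\alpha(2k+1)\pi \bmod 2\pi$ (Weyl) to find infinitely many intervals of fixed length on which $\cos\alpha x-\cos x>c+\tfrac12$. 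It finishes with Abel summation, writing $\mathcal{I}_{p,1}$ as $\sum_k S_k\,\mathcal{I}_{1,1}(d_k,d_{k+1})$. Every term there is a convergent Frullani integral except $a_0\int_0^\infty(1-\cos d_1x)/x\,dx$.

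Your Jensen step removes the rational/irrational case split entirely. Let $h=(\cos\alpha x-\cos\beta x)^2$. Since $\frac1T\int_0^Tg\ge\bigl(\frac1T\int_0^Th\bigr)^m$ for every $T$, and the mean of $h$ can be computed exactly by product-to-sum, you get an explicit bound. The cross term $\cos\alpha x\cos\beta x$ has mean zero precisely because $\alpha\pm\beta\neq0$, which is exactly where $|\alpha|\ne|\beta|$ enters. This gives $c_0\ge 1$, or $c_0\ge(3/2)^m$ when one of $\alpha,\beta$ is $0$. That is quantitative information the paper's argument does not provide.

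Splitting at $x=1$ and using Lemma~\ref{boundary0} near the origin also lets you skip the Abel-summation pairing. The paper needs that pairing only because it integrates each $\cos(d_kx)/x$ from $0$, where it is not integrable on its own.

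Two small corrections. First, the chain you wrote beginning with $c_0\ge\frac1T\int_0^Tg$ is false for fixed $T$, since that average equals $c_0+O(1/T)$ with an error of either sign. State the Jensen inequality between the two averages for each $T$, and only then let $T\to\infty$. Second, in the degenerate case the mean of $h$ is exactly $3/2$, not merely a ``$3/2$-type'' value.
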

\begin{proof}
 We first note that $\mathcal{I}_{p, 1}(\alpha, \beta)=\mathcal{I}_{p, 1}(\alpha/\beta, 1)$ by the substitution rule. Hence it suffices to consider the case $\beta=1$.
 
    By the binomial expansion theorem together with several trigonometric formulas (for more details, consult Subsubsection \ref{sec3.2.2} or Subsection \ref{sec4.1}), we can write
    \begin{equation}\label{evenp}
        (\cos \alpha x-\cos x)^p=\sum_{k=0}^ta_k\cos d_kx,
    \end{equation}
    for some $t\in\mathbb{N}$. Here we assume $a_0\in\mathbb{R}$, $d_0=0$, $a_k, d_k\in\mathbb{R}\setminus\{0\}$ and $d_k$'s are distinct, where $k\in\{1, \ldots, t\}$. We first claim that $a_0\ne 0$. Otherwise, for $R>0$, integrating both sides of \eqref{evenp} gives
    \begin{align}\label{evenp_2}
        \int_0^{R}(\cos\alpha x-\cos x)^pdx=\sum_{k=1}^ta_k\int_0^R\cos d_kxdx=\sum_{k=1}^t\frac{a_k\sin d_kR}{d_k}.
    \end{align}
    We see that the right-hand side of \eqref{evenp_2} is bounded. Now we claim that the left-hand side of \eqref{evenp_2} tends to $\infty$ as $R\to\infty$, which will lead to a contradiction. Indeed, if $\alpha\in\mathbb{Q}$, then the function $(\cos\alpha x-\cos x)^p$ is positive a.e. and periodic. Now, suppose $\alpha\not\in\mathbb{Q}$. By Weyl's criterion, we know that the sequence
    $\{\alpha(2k+1)\pi \pmod{2\pi} : k\in\mathbb{N}\}$
    is dense in $[0, 2\pi]$. In particular, there exist infinitely many $k\in\mathbb{N}$ such that
    \[|\alpha(2k+1)\pi-2m_k\pi|<\frac{1}{100} \quad\text{for some } m_k\in\mathbb{Z}.\]
    Now define $x_k=(2k+1)\pi$ for each $k$. Then we have $\cos x_k=-1$ and 
    \[\cos \alpha x_k=\cos(\alpha(2k+1)\pi)=\cos(2m_k\pi+\varepsilon_k)=\cos \varepsilon_k\]
    with $|\varepsilon_k|<1/100$. Hence $\cos \alpha x_k>\cos (1/100)=: c\in(1/2, 1)$. Therefore, for all $k$, $\cos \alpha x_k-\cos x_k>c+1$. Now, by continuity, we can find  $0<\delta<1/100$ (independent of $k$) such that 
    \[\cos\alpha x-\cos x>c+\frac{1}{2}\quad \text{for all } (x_k-\delta, x_k+\delta).\]
    Since $p$ is even, we hence yield
    \begin{align*}
        \int_0^{\infty}(\cos\alpha x-\cos x)^pdx\ge \sum_{k=1}^{\infty}\int_{x_k-\delta}^{x_k+\delta}(\cos\alpha x-\cos x)^pdx \ge \sum_{k=1}^{\infty}2\delta \left(c+\frac{1}{2}\right)^p=\infty.
    \end{align*}
    This implies the divergence of the integral.
    

 Next, let $S_k:=\sum_{i=0}^ka_i$. By plugging in $x=0$ in \eqref{evenp}, we note that $S_t= 0$. Then we have the Abel's summation formula\footnote{Please note that this technique will be very useful from time to time.}
        \begin{align*}
            (\cos\alpha x-\cos  x)^p &=a_0+\sum_{k=1}^t(S_k-S_{k-1})\cos d_kx\\
            &=a_0+\sum_{k=1}^tS_k\cos d_kx-\sum_{k=1}^tS_{k-1}\cos d_kx\\
            &=a_0(\cos d_0x-\cos d_1x)+\sum_{k=1}^{t-1}S_k(\cos d_kx-\cos d_{k+1}x)\\
            &=\sum_{k=0}^{t-1}S_k(\cos d_kx-\cos d_{k+1}x).
        \end{align*}
    Hence we can write
    \begin{align*}
        \mathcal{I}_{p, 1}(\alpha, 1)=\sum_{k=0}^{t-1}S_k\mathcal{I}_{1, 1}(d_k, d_{k+1}),
    \end{align*}
    which is divergent since $\mathcal{I}_{1, 1}(d_k, d_{k+1})=\log|d_{k+1}|-\log|d_k|$ for $k\in\{1, \ldots, t-1\}$ and $\mathcal{I}_{1, 1}(0, d_1)=\int_0^{\infty}((1-\cos d_1x)/x)dx$ diverges.
\end{proof}

\subsection{Case $q=1$, odd $p$}

\subsubsection{The set $\mathcal{F}_p$}

Now we turn our attention to when $q=1$ and $p$ is odd. This case is somewhat more challenging to discuss. Before that, we construct several notations below.

\begin{definition}
    Let $b\in\mathbb{N}$ be odd and $a\in\{0, 1, \ldots, b-1\}$. Denote
    \[\mathcal{R}_{a, b}:=\left\{a-2k : k\in\mathbb{Z} \text{ and } m_{a, b}\le k\le\left\lfloor\frac{a}{2}\right\rfloor\right\},\]
    where $m_{a, b}:=\max\{0, a-(b-1)/2\}$.
    Define
    \begin{align*}
        \mathcal{F}_{a, b}:=\left\{\frac{r}{s}\in[0, 1) : r\in \mathcal{R}_{a, b}\text{ and } s\in\{r+1, r+3, \ldots, b-a\}\right\}.
    \end{align*}
\end{definition}

\noindent
As an illustration, if $b=5$, then
\[\mathcal{F}_{0,5}=\{0\}=\mathcal{F}_{4,5},\quad \mathcal{F}_{1,5}=\left\{\frac{1}{2},\frac{1}{4}\right\},\quad\mathcal{F}_{2,5}=\left\{0,\frac{2}{3}\right\},\quad\mathcal{F}_{3,5}=\left\{\frac{1}{2}\right\}.\]

\begin{remark}
    In general, we have $\mathcal{F}_{0, p}=\mathcal{F}_{p-1, p}=\{0\}$ for every positive odd $p$.
\end{remark}

Let us show some basic properties of the set $\mathcal{F}_{a, b}$.

\begin{lemma}\label{lemmaofF}
    Let $b\in\mathbb{N}$ be odd.
    \begin{enumerate}
        \item[(a)]\label{l3.4a} Let $u, v\in\{0, 1, \ldots, b-1\}$. Then $\mathcal{F}_{u, b}\cap\mathcal{F}_{v, b}\ne\emptyset$ if and only if $u$ and $v$ have the same parity.
        \item[(b)] $\mathcal{F}_{a, b}\subseteq \mathcal{F}_{a-2, b}$ for every $a\in\{(b+1)/2, \ldots, b-1\}$.
    \end{enumerate}
\end{lemma}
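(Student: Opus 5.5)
The plan is to first make the structure of $\mathcal{F}_{a,b}$ explicit, and then read off both parts directly from the definition.

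\textbf{Description of the sets.} Unwinding the definition, $\mathcal{R}_{a,b}$ consists of the integers $r=a-2k$ with $r\equiv a\pmod 2$ and
\[
0\le r\le a-2m_{a,b}.
\]
For $r/s\in\mathcal{F}_{a,b}$, the denominator $s$ runs through $r+1,r+3,\ldots,b-a$. Since $b$ is odd, $b-a\equiv a+1\pmod 2$, so this range is consistent with parity. Hence every element of $\mathcal{F}_{a,b}$ is written as $r/s$ with
\[
r\equiv a\pmod 2,\qquad s\equiv a+1\pmod 2 .
\]

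\textbf{Part (a).} For the ``only if'' direction, suppose $u$ is even and $v$ is odd. Every element of $\mathcal{F}_{u,b}$ has the form (even)/(odd), and every element of $\mathcal{F}_{v,b}$ has the form (odd)/(even). Reducing $r/s$ to lowest terms divides by $\gcd(r,s)$, which is odd because one of $r,s$ is odd. This reduction preserves both parities. The one exception is the value $0$, which only has the representation $0/s$, and that requires $r=0$, which is even. So the reduced forms differ and the sets are disjoint.

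For the ``if'' direction I exhibit a common element.
\begin{itemize}
\item If $u$ is even, take $r=0$, that is, $k=u/2$. This needs $u/2\ge u-(b-1)/2$, which is equivalent to $u\le b-1$. It also needs $s=1\le b-u$, which holds. So $0\in\mathcal{F}_{u,b}$ for every even $u$.
\item If $u$ is odd, take $r=1$, that is, $k=(u-1)/2$. This needs $(u-1)/2\ge u-(b-1)/2$, which is equivalent to $u\le b-2$. This is true because $u$ is odd and $b-1$ is even. It also needs $s=2\le b-u$, which holds since $b-u$ is even and positive. So $1/2\in\mathcal{F}_{u,b}$ for every odd $u$.
\end{itemize}

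\textbf{Part (b).} Let $a\ge (b+1)/2$. Then $m_{a,b}=a-(b-1)/2$, so the admissible numerators are the $r\equiv a\pmod 2$ with $0\le r\le b-1-a$.

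For $a-2$ the parity is the same and the lower bound $r\ge 0$ is the same. The upper bound $a-2-2m_{a-2,b}$ falls into two cases:
\begin{itemize}
\item If $m_{a-2,b}>0$, the bound is $b+1-a$.
\item If $m_{a-2,b}=0$, the bound is $a-2$, and $a-2\ge b-1-a$ because $2a\ge b+1$.
\end{itemize}
In both cases $\mathcal{R}_{a,b}\subseteq\mathcal{R}_{a-2,b}$. The denominator range for $a-2$ runs up to $b-a+2>b-a$ and starts at the same value $r+1$. Therefore $\mathcal{F}_{a,b}\subseteq\mathcal{F}_{a-2,b}$.

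\textbf{Main obstacle.} The only delicate point is the disjointness in (a). There one must argue via reduced fractions rather than via representations, and treat the value $0$ separately.
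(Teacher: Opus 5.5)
Your proof is correct and follows essentially the same route as the paper. Disjointness comes from the parities of numerator and denominator, the converse uses the witnesses $0$ and $1/2$, and part (b) uses $\mathcal{R}_{a,b}\subseteq\mathcal{R}_{a-2,b}$ together with the longer denominator range. One point of comparison: the reduced-fraction step is not actually needed. The paper cross-multiplies two representations $c/d=e/f$ with $c,f$ odd and $d,e$ even, so $cf=de$ is immediately an odd/even contradiction, and the value $0$ needs no separate treatment.
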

\begin{proof}
    Part (a). ($\Rightarrow$) We prove by contrapositive. Suppose $u$ and $v$ have different parity, says $u$ is odd while $v$ is even. Assume to the contrary that there exists $y\in\mathcal{F}_{u, b}\cap\mathcal{F}_{v, b}$. Then we can write $y=c/d=e/f$ where $c, f$ are odd while $d, e$ are even. However this is absurd since $cf=de$.\\
    ($\Leftarrow$) There are two cases to consider. First if both $u$ and $v$ are even, then one can easily see that $0\in\mathcal{R}_{u, b}\cap\mathcal{R}_{v, b}$ because $2\lfloor u/2\rfloor=u$, $2\lfloor v/2\rfloor=v$, and so $0\in\mathcal{F}_{u, b}\cap\mathcal{F}_{v, b}$.  On the other hand, if both $u$ and $v$ are odd, then one can check that $1/2\in\mathcal{F}_{u, b}\cap\mathcal{F}_{v, b}$.

    Part (b). It suffices to show that $\mathcal{R}_{a, b}\subseteq \mathcal{R}_{a-2, b}$. We observe that
    \begin{align*}
    m_{a, b}=\begin{cases}
        0 &\text{if } a\in\{0, 1, \ldots, (b-1)/2\},\\
        a-(b-1)/2\ge 1 &\text{if } a\in\{(b+1)/2, \ldots, b-1\}.
    \end{cases}
    \end{align*}
    If $a=(b+1)/2$, then $m_{a, b}-m_{a-2, b}=1=\lfloor a/2\rfloor-\lfloor (a-2)/2\rfloor$, and so $\mathcal{R}_{a, b}=\mathcal{R}_{a-2, b}$. Otherwise, we have
    \begin{align*}
        \mathcal{R}_{a, b}&=\left\{-a+(b-1), -a+(b-1)-2, \ldots, a-2\left\lfloor \frac{a}{2}\right\rfloor\right\},
    \end{align*}
    and $\mathcal{R}_{a-2, b}=\mathcal{R}_{a, b}\cup\{-a+(b-1)+2\}$.
    This clearly indicates that $\mathcal{R}_{a, b}\subseteq \mathcal{R}_{a-2, b}$.
\end{proof}

Due to Lemma \ref{lemmaofF}(b), it then makes sense to define the set\footnote{To the best of our current knowledge, the set $\mathcal{F}_p$ does not appear in the existing literature. Nevertheless, it exhibits intriguing combinatorial features that suggest a promising direction for future research.}
\[\mathcal{F}_{b}:=\bigcup_{a\in\{0, 1, \ldots, (b-1)/2\}}\mathcal{F}_{a, b}.\]
For example, $\mathcal{F}_5=\{0, 1/2, 1/4, 2/3\}$. It is easy to verify that $\max\mathcal{F}_b=(b-1)/(b+1)$. Now, the criterion we found for our odd $p$ situation is the following.

\begin{theorem}\label{p=1,oddp}
    Let $\alpha, \beta\in\mathbb{R}$ with $|\alpha|\ne|\beta|$, and let $p\in\mathbb{N}$ be odd.  Then
		$\mathcal{I}_{p,1}(\alpha,\beta)$ diverges if and only if either $\lvert\alpha/\beta\rvert\in\mathcal{F}_p$ or $\lvert\beta/\alpha\rvert\in\mathcal{F}_p$.
\end{theorem}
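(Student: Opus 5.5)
Throughout, the integrand is $f_{p,1}[\alpha,\beta](x)=(\cos\alpha x-\cos\beta x)^p/x$, so the plan is to locate exactly when the trigonometric expansion of the numerator has a nonzero constant term.

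\emph{Reduction and structure.} As in the proof of Theorem~\ref{thm3.1}, the substitution $x\mapsto x/|\beta|$ together with evenness of cosine lets us assume $\beta=1$ and $\alpha\ge 0$. The case $\alpha>1$ is handled by swapping the roles of $\alpha$ and $\beta$ and rescaling; since $p$ is odd, this only changes the sign of the integral. So it suffices to show that, for $0\le\alpha<1$, $\mathcal{I}_{p,1}(\alpha,1)$ diverges if and only if $\alpha\in\mathcal{F}_p$. Write, exactly as in \eqref{evenp},
\[(\cos\alpha x-\cos x)^p=\sum_{k=0}^t a_k\cos d_kx,\qquad d_0=0,\ d_k\ne0 \text{ distinct}.\]
The same Abel summation as in Theorem~\ref{thm3.1}, using $S_t=0$, gives
\[\mathcal{I}_{p,1}(\alpha,1)=\sum_{k=0}^{t-1}S_k\,\mathcal{I}_{1,1}(d_k,d_{k+1}).\]
For $k\ge1$ each term is finite by Lemma~\ref{basic1}, while the term $k=0$ involves $\int_0^\infty (1-\cos d_1x)/x\,dx$, which diverges. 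The coefficient of that term is $S_0=a_0$. Hence the integral diverges if and only if $a_0\ne0$. (Alternatively: near $0$ the integrand is $O(x^{2p-1})$ by Lemma~\ref{boundary0}, and at $\infty$ each $\cos(d_kx)/x$ with $d_k\neq0$ is integrable, whereas $a_0/x$ is not.)

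\emph{Computing $a_0$.} Expand binomially, and use $\cos^m\theta=2^{-m}\sum_{i=0}^m\binom mi\cos((m-2i)\theta)$ together with product-to-sum:
\[(\cos\alpha x-\cos x)^p=\sum_{j=0}^p\binom pj(-1)^j\cos^{p-j}(\alpha x)\cos^j x .\]
This is a combination, with \emph{positive} coefficients times $(-1)^j$, of terms $\cos((n\alpha\pm m)x)$. Here $n\in\{p-j,p-j-2,\dots\}$ with $n\ge0$ and $n\equiv p-j \pmod 2$, and $m\in\{j,j-2,\dots\}$ with $m\ge0$ and $m\equiv j\pmod 2$. A zero frequency arises exactly when $n\alpha=m$. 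Since $n+m\equiv p$ is odd, $n$ and $m$ cannot both vanish. Consequently:
\begin{itemize}
\item If $\alpha$ is irrational, there is no zero frequency, so $a_0=0$ and the integral converges.
\item If $\alpha=r/s$ in lowest terms (with $\alpha=0$ read as $0/1$), the solutions are $n=ks$, $m=kr$. Oddness of $n+m$ forces $k$ odd, and then $j\equiv m\equiv r\pmod2$.
\end{itemize}
Thus every contribution to $a_0$ carries the same sign $(-1)^r$, so there is no cancellation. Therefore $a_0\ne0$ if and only if there exist $j\in\{0,\dots,p\}$ and a representation $\alpha=m/n$ with $m\le j$, $m\equiv j$, $n\le p-j$, $n\equiv p-j\pmod 2$.

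\emph{Matching with $\mathcal{F}_p$ (main obstacle).} It remains to prove that this set of admissible $\alpha\in[0,1)$ is exactly $\mathcal{F}_p$. I would identify $a=j$, numerator $r=m\in\mathcal{R}_{a,p}$ and denominator $s=n\in\{r+1,r+3,\dots,p-a\}$. The parity condition $n\equiv p-j$ becomes $s\equiv r+1$, using $p$ odd and $m\equiv j$. The condition $m<n$ (i.e.\ $\alpha<1$) is what produces the lower cutoff $m_{a,p}$: one needs $r<s\le p-a$, so $r\le p-a-1$, i.e.\ $k\ge a-(p-1)/2$. Finally, Lemma~\ref{lemmaofF}(b) shows that indices $a>(p-1)/2$ contribute nothing new, so the union over all $a$ collapses to the union over $a\le (p-1)/2$, which is $\mathcal{F}_p$. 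This bookkeeping — checking that the ranges of $r$ and $s$ in the definition of $\mathcal{F}_{a,p}$ coincide exactly with the admissible $(m,n)$, including non-reduced representations — is the step I expect to require the most care. The remaining steps are direct adaptations of Theorem~\ref{thm3.1}.
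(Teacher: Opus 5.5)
Your proposal is correct and follows essentially the paper's route: the same Abel-summation reduction to whether the constant term $a_0$ vanishes, then a parity argument showing that all zero-frequency contributions have the same sign, and finally a matching of the admissible ratios $m/n$ with $\mathcal{F}_p$ using Lemma~\ref{lemmaofF}(b). Your lowest-terms sign argument ($n=ks$, $m=kr$ with $k$ odd, forcing $j\equiv r \pmod 2$) is a cleaner version of what the paper does with Lemmas~\ref{lemmaofF}(a) and \ref{l3.8}, and your explicit appeal to Lemma~\ref{lemmaofF}(b) to discard binomial indices $j>(p-1)/2$ makes precise a step the paper leaves implicit.
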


We need to introduce some more technical knowledge before proving this.


\subsubsection{An expansion of $\mathcal{J}$}\label{sec3.2.2}

Let $\alpha, \beta\in\mathbb{R}$. A key ingredient for our analysis is expanding the expression
\[\mathcal{J}(x):=(\cos \alpha x-\cos \beta x)^p\]
as a sum of unpowered cosines. There will be two distinct expansions of $\mathcal{J}$ in this paper. The first one we will be presenting here is ideal for classifying convergence, while the other one (which will be introduced in the next section) facilitates explicit evaluation once convergence is established.

\begin{lemma}\cite{II}
    Let $n\in\mathbb{N}\cup\{0\}$ and $x, y\in\mathbb{R}$. Then we have
    \[\cos^nx=\begin{cases}
				\displaystyle\frac{1}{2^{n-1}}\sum_{k=0}^{\frac{n-1}{2}}{n\choose k}\cos((n-2k)x)&\text{if }n\text{ is odd,}\\\\
				\displaystyle\frac{1}{2^n}{n\choose n/2}+\frac{1}{2^{n-1}}\sum_{k=0}^{\frac{n-2}{2}}{n\choose k}\cos((n-2k)x)&\text{if }n\text{ is even.}
			\end{cases}\]
\end{lemma}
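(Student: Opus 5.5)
The plan is to prove the formula by writing the cosine in exponential form and applying the binomial theorem. For real $x$, write $\cos x=\tfrac12(e^{ix}+e^{-ix})$, so that
\[
\cos^n x=\frac{1}{2^n}\sum_{k=0}^{n}\binom{n}{k}e^{i(n-k)x}e^{-ikx}=\frac{1}{2^n}\sum_{k=0}^{n}\binom{n}{k}e^{i(n-2k)x}.
\]
Everything else is a regrouping of this finite sum.

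Next, pair the index $k$ with $n-k$. The symmetry $\binom{n}{k}=\binom{n}{n-k}$ holds, and the exponents $n-2k$ and $n-2(n-k)=-(n-2k)$ are negatives of each other. Each pair $\{k,n-k\}$ with $k<n-k$ therefore contributes
\[
\binom{n}{k}\bigl(e^{i(n-2k)x}+e^{-i(n-2k)x}\bigr)=2\binom{n}{k}\cos((n-2k)x).
\]
The indices with $k<n/2$ are $k=0,\dots,\lfloor (n-1)/2\rfloor$.

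If $n$ is odd, there is no unpaired index, since $k=n/2$ is not an integer. The sum then becomes $\frac{2}{2^n}\sum_{k=0}^{(n-1)/2}\binom{n}{k}\cos((n-2k)x)$, which is the first case.

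If $n$ is even, the index $k=n/2$ is self-paired and contributes $\binom{n}{n/2}e^{0}=\binom{n}{n/2}$. The remaining pairs run over $k=0,\dots,(n-2)/2$, which gives the second case.

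The case $n=0$ fits the even formula: the sum over $k$ is empty and the constant term is $\frac{1}{2^0}\binom{0}{0}=1=\cos^0 x$. I would check this separately, since the upper index $(n-2)/2=-1$ makes the sum empty.

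There is no real obstacle here. The only delicate point is the index bookkeeping: making sure each unordered pair is counted exactly once, and that the middle term appears only when $n$ is even. I would state the pairing as a bijection $k\mapsto n-k$ on $\{0,\dots,n\}$ whose only fixed point, when $n$ is even, is $n/2$.

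As a sanity check, $n=2$ gives $\cos^2x=\tfrac12+\tfrac12\cos2x$, and $n=3$ gives $\cos^3 x=\tfrac14(\cos3x+3\cos x)$, both of which match the standard identities.
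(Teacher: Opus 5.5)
Your proof is correct and complete. Writing $\cos x=\tfrac12(e^{ix}+e^{-ix})$, expanding with the binomial theorem, and pairing $k\leftrightarrow n-k$ gives both cases exactly: the middle index $k=n/2$ is unpaired only when $n$ is even, and the empty-sum convention covers $n=0$. The paper does not prove this lemma itself but quotes it from \cite{II}; your argument is the standard one, and it uses the same Euler-formula-and-pairing technique the paper later applies in Remark~\ref{explicita_k} to expand $(1-\cos\beta x)^p=2^p\sin^{2p}(\beta x/2)$.
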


Now suppose $n\in\mathbb{N}$ is even while $n'\in\mathbb{N}$ is odd. By the product-to-sum formula, we have
\begin{align*}
     \cos^nx\cos^{n'}y&=\frac{1}{2^{n+n'-1}}\sum_{m=0}^{\frac{n-2}{2}}\sum_{\ell=0}^{\frac{n'-1}{2}}\binom{n}{m}\binom{n'}{\ell}(\cos(\omega_{n,n',s, \ell}^+(x, y))+\cos(\omega_{n,n',s, \ell}^-(x, y)))\\
        &\quad+\frac{1}{2^{n+n'-1}}\sum_{\ell=0}^{\frac{n'-1}{2}}\binom{n}{n/2}\binom{n'}{\ell}\cos(\omega_{n',\ell}(y)).
\end{align*}
Here we make the notations
\begin{align*}
    \omega_{n,n',m, \ell}^+(x, y):=(n-2m)x+(n'-2\ell)y,\quad \omega_{n,n',m,\ell}^-(x, y):=(n-2m)x-(n'-2\ell)y.
\end{align*}
In particular, when $x=0$, we have $\omega_{n, n', m, \ell}^{\pm}(0, y)=(n'-2\ell)y$, and when $y=0$, 
we have $\omega_{n,n',m, \ell}^{\pm}(x, 0)=(n-2m)x$. For brevity, we write these as 
$\omega_{n',\ell}(y)$ and $\omega_{n,m}(x)$ respectively.

Finally, fix $p, q, \alpha, \beta$, and let $p\geq3$ be odd. Using this and the binomial expansion theorem, we can write 
\begin{equation}\label{decomJ}
    \mathcal{J}(x)=\mathcal{J}_1(x)-\mathcal{J}_2(x),
\end{equation} where
\begin{align*}
    \mathcal{J}_1 &:=\sum_{\substack{k \text{ even}\\ 0\le k\le p-1}}\binom{p}{k}\cos^{p-k}\alpha x\cos^k \beta x\\
    &=\frac{1}{2^{p-1}}\sum_{\substack{k\text{ even}\\2\leq k\leq p-1}}\sum_{m=0}^{\frac{k-2}{2}}\sum_{\ell=0}^{\frac{p-k-1}{2}}\xi_{k, \ell,m}(\cos(\omega^+_{k,p-k,m,\ell}( \beta x,\alpha x))+\cos(\omega^-_{k,p-k,m,\ell}(\beta x,\alpha x)))\\
    &\quad+\sum_{\substack{k\text{ even}\\0\leq k\leq p-1}}\frac{1}{2^{p-k-1}}\sum_{\ell=0}^{\frac{p-k-1}{2}}\xi_{k, \ell,k/2}{k\choose k/2}\cos(\omega_{p-k,\ell}(\alpha x))
\end{align*}
and
\begin{align*}
    \mathcal{J}_2 &:=\sum_{\substack{k \text{ odd}\\ 1\le k\le p}}\binom{p}{k}\cos^{p-k}\alpha x\cos^k \beta x\\
    &=\frac{1}{2^{p-1}}\sum_{\substack{k \text{ odd}\\ 1\leq k\leq p-2}}\sum_{m=0}^{\frac{p-k-2}{2}}\sum_{\ell=0}^{\frac{k-1}{2}}\xi_{k,m,\ell}(\cos(\omega^+_{p-k,k,m,\ell}(\alpha x, \beta x))+\cos(\omega^-_{p-k,k,m,\ell}(\alpha x, \beta x)))\\
    &\quad+\sum_{\substack{k \text{ odd}\\ 1\leq k\leq p}}\frac{1}{2^{k-1}}\sum_{\ell=0}^{\frac{k-1}{2}}\xi_{k,(p-k)/2,\ell}\cos(\omega_{k,\ell}(\beta x)).
\end{align*}
Here, for convenience, we use the notation
\[\xi_{k,a,b}={p\choose k}{p-k\choose a}{k\choose b}.\]

\subsubsection{Finishing the proof of Theorem \ref{p=1,oddp}}

Next, we need some additional lemmas before we prove Theorem \ref{p=1,oddp}.

\begin{lemma}\label{l3.7}
    Let $p\geq3$ be odd. For $k\in\{0, 1, \ldots, (p-1)/2\}$, define
    \begin{align*}
        A_k = \left\{0, 1, \ldots, \left\lfloor\frac{k}{2}\right\rfloor\right\}
        \quad\text{and}\quad
        B_k =\left\{0, 1, \ldots, \left\lfloor\dfrac{p-k-1}{2}\right\rfloor\right\}.
    \end{align*}
    Consequently, define 
    \[\mathcal{A}_{p, k}:=\left\{\frac{k-2a}{p-k-2b}\in[0,1) :  a\in A_k, b\in B_k\right\},\quad \mathcal{A}_p=\bigcup_{k\in\{0, \ldots, \frac{p-1}{2}\}}\mathcal{A}_{p, k}.\]
    Then we have $\mathcal{A}_p=\mathcal{F}_p$.
\end{lemma}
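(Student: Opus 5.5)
The plan is to prove the stronger statement that $\mathcal{A}_{p,k}=\mathcal{F}_{k,p}$ for every $k\in\{0,1,\ldots,(p-1)/2\}$. Taking the union over $k$ then gives $\mathcal{A}_p=\mathcal{F}_p$ directly from the definition of $\mathcal{F}_p$. I would compare numerators first and then denominators.

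\emph{Numerators.} For $k\le (p-1)/2$ we have $m_{k,p}=\max\{0,k-(p-1)/2\}=0$. Hence $\mathcal{R}_{k,p}=\{k-2a : 0\le a\le\lfloor k/2\rfloor\}=\{k-2a : a\in A_k\}$. So the possible numerators $r$ in $\mathcal{A}_{p,k}$ and in $\mathcal{F}_{k,p}$ are the same set. Every such $r$ satisfies $r\ge 0$ and $r\equiv k\pmod 2$.

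\emph{Denominators.} Fix such an $r$. In $\mathcal{F}_{k,p}$ the admissible denominators are $s\in\{r+1,r+3,\ldots,p-k\}$. Since $p$ is odd, $p-k\equiv k+1\equiv r+1 \pmod 2$, so this list is exactly the set of integers $s$ with $r<s\le p-k$ and $s\equiv p-k\pmod 2$. For every such $s$ the fraction $r/s$ automatically lies in $[0,1)$, because $0\le r<s$.

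In $\mathcal{A}_{p,k}$ the denominators are $p-k-2b$ with $0\le b\le\lfloor (p-k-1)/2\rfloor$. These are all integers congruent to $p-k$ modulo $2$ lying between $p-k$ and $p-k-2\lfloor (p-k-1)/2\rfloor\in\{1,2\}$. In other words, they are all positive integers $\le p-k$ with the parity of $p-k$. The membership condition $\frac{r}{p-k-2b}\in[0,1)$ forces $p-k-2b>r$, and since $r\ge 0$ the denominator is positive anyway. So the surviving denominators are exactly the integers $s$ with $r<s\le p-k$ and $s\equiv p-k\pmod 2$. This is the same set as for $\mathcal{F}_{k,p}$, so $\mathcal{A}_{p,k}=\mathcal{F}_{k,p}$.

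\emph{Main obstacle.} The only delicate point is the bookkeeping in the denominator step. One must check the parity match $p-k\equiv r+1$, which uses that $p$ is odd. One must also check that the lower end of the range of $b$ reaches down to $1$ or $2$, so that no admissible $s>r$ is missed. Finally, one must confirm that the $[0,1)$ restriction in $\mathcal{A}_{p,k}$ cuts the range at exactly $s>r$. I would verify these carefully, for instance against the example $\mathcal{F}_5=\{0,1/2,1/4,2/3\}$.
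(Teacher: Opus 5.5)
Your proposal is correct and follows essentially the same route as the paper: the paper also keeps $k$ fixed, identifies the numerators $k-2a$ with $\mathcal{R}_{k,p}$, and matches denominators by writing $p-k-2b=(k-2a)+t$ with $t$ odd and $0<t\le p-2k+2a$, inverting via $b=a-k+(p-t)/2$. Your parity-and-range description of the denominators is a slightly cleaner packaging of that correspondence. It also makes explicit the per-$k$ equality $\mathcal{A}_{p,k}=\mathcal{F}_{k,p}$ and the role of the $[0,1)$ filter in forcing $s>r$, both of which the paper uses only implicitly.
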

    \begin{proof}
    Note that $0\leq a\leq\lfloor k/2\rfloor$, and $0\leq b\leq(p-k-1)/2\leq(p-1)/2$ for $0\leq k\leq (p-1)/2$. Hence
    $1\leq p-k-2b\leq p$ {and}  $k-2a\geq0$.
    This implies $k-2a\in\mathcal{R}_{k,p}$. Now observe that \[p-k-2b=(k-2a)+(p+2(a-b-k)).\]
    It is clear that $p+2(a-b-k)$ is odd and
        \[0< p+2(a-b-k)\leq p+2\left(\frac{k}{2}-k\right)=p-2k+2a.\]
    Thus we obtain
    \[\frac{k-2a}{p-k-2b}=\frac{k-2a}{(k-2a)+(p+2(a-b-k))}\in\mathcal{F}_{k,p}\subset\mathcal{F}_p.\]

    Conversely, let $x\in\mathcal{F}_p$. Then there exist $k\in\{0,\ldots,(p-1)/2\}$, $a\in\{0,\ldots,\lfloor k/2\rfloor\}$ and $t\in\{1,3,\ldots,p-2k+2a\}$ such that
    \[x=\frac{k-2a}{k-2a+t}.\]
    It is easy to verify that $a\leq k/2$, and the following inequalities:
    \[k-2a+t=p-k-2\left(\frac{p-t}{2}+a-k\right),\quad 0\leq\frac{p-t}{2}+a-k\leq\frac{p-k-1}{2}.\]
    Since $t$ and $p$ are odd, so we can choose $b=a-k+(p-t)/2\in\mathbb{Z}$. Hence
    \[x=\frac{k-2a}{p-k-2b}\in\mathcal{A}_p\]
    as desired.
\end{proof}

\begin{lemma}\label{l3.8}
    Let $p\geq3$ be odd, $k\in\{0,\ldots,(p-1)/2\}$, and $a,b\in\mathbb{Z}$ such that $0\leq a\leq k/2$ and $0\leq b\leq (p-k-1)/2$, and 
    \[\frac{\alpha}{\beta}=\frac{k-2a}{p-k-2b}\in[0,1).\]
    \begin{enumerate}
        \item[(a)] If $k$ is even, then $\cos(\omega^-_{k,p-k,a,b}(\beta x,\alpha x))$ is constant, but $\cos(\omega^-_{p-k,k,b,a}(\alpha x,\beta x))$ is not, on $\mathbb{R}$.
        \item[(b)] If $k$ is odd, then $\cos(\omega^-_{p-k,k,b,a}(\alpha x,\beta x))$ is constant, but $\cos(\omega^-_{k,p-k,a,b}(\beta x,\alpha x))$ is not, on $\mathbb{R}$.
    \end{enumerate}
\end{lemma}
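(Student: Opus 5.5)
The plan is to substitute the hypothesis $\alpha/\beta=(k-2a)/(p-k-2b)$ directly into the definition
\[\omega^-_{n,n',m,\ell}(x,y)=(n-2m)x-(n'-2\ell)y,\]
and then read off which frequency collapses to $0$. The hypothesis gives $(p-k-2b)\alpha=(k-2a)\beta$. Here $p-k-2b\ge 1$ because $b\le (p-k-1)/2$, so the ratio is well defined.

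For the first cosine in part (a), the definition gives
\[\omega^-_{k,p-k,a,b}(\beta x,\alpha x)=\bigl((k-2a)\beta-(p-k-2b)\alpha\bigr)x=0.\]
Hence $\cos(\omega^-_{k,p-k,a,b}(\beta x,\alpha x))\equiv 1$ is constant, and this uses no parity at all. When $k$ is even this is exactly the index pattern of the triple sum in $\mathcal{J}_1$: $\beta$ carries the even exponent $k$ and the summation index $m=a\le (k-2)/2$, while $\alpha$ carries $p-k$ with $\ell=b$. So this step also identifies the surviving constant term inside $\mathcal{J}_1$, which is what Theorem~\ref{p=1,oddp} needs. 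If $a=k/2$, the term lies in the separate single sum of $\mathcal{J}_1$ rather than the triple sum, and I would treat that boundary index separately. Part (b) is the mirror image. For $k$ odd, $\alpha$ carries the even exponent $p-k$ in $\mathcal{J}_2$, and
\[\omega^-_{p-k,k,b,a}(\alpha x,\beta x)=\bigl((p-k-2b)\alpha-(k-2a)\beta\bigr)x=0,\]
so that cosine is identically $1$.

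The remaining claim in each part is that the other cosine is \emph{not} constant, and this is the step I expect to be the real obstacle. Expanding literally, the other frequency is $\pm\bigl((p-k-2b)\alpha-(k-2a)\beta\bigr)$. This is the negative of the one just computed, so it also vanishes identically. Consequently, with $\omega^-$ read verbatim, both cosines in each part equal $1$, and the ``not constant'' assertions cannot be derived from the displayed formula.

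What I would try first is to check the index convention against how the terms actually sit in $\mathcal{J}_1$ and $\mathcal{J}_2$ after Lemma~\ref{l3.7}. The point to examine is that the meaningful parity obstruction is the one in Lemma~\ref{lemmaofF}(a). When $k$ is even, the exponent of $\beta$ in any $\mathcal{J}_2$ term is odd. So no $\mathcal{J}_2$ frequency with coefficient $k-2a$ (even) on $\beta$ and $p-k-2b$ (odd) on $\alpha$ can occur. The $\mathcal{J}_2$ frequency built from the same pair $(a,b)$ must therefore have $\alpha$ and $\beta$ coefficients of the opposite parities, namely
\[(p-k'-2m)\alpha-(k'-2\ell)\beta\quad\text{with }k'\text{ odd}.\]
By the cross-multiplication argument of Lemma~\ref{lemmaofF}(a), this is nonzero whenever $\alpha/\beta=(k-2a)/(p-k-2b)$, hence its cosine is nonconstant. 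If the paper's intended indexing of $\omega^-_{p-k,k,b,a}$ refers to that $\mathcal{J}_2$ term, then the nonconstancy follows from this parity argument. Otherwise the statement needs correcting, and I would record that explicitly rather than force it.
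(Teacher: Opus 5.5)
Your diagnosis is correct, and it is the main point here. By the definition $\omega^-_{n,n',m,\ell}(x,y)=(n-2m)x-(n'-2\ell)y$, the two frequencies in each part are $\pm\bigl((k-2a)\beta-(p-k-2b)\alpha\bigr)x$. Under the hypothesis both cosines are therefore identically $1$, so the ``not constant'' halves of (a) and (b) are false as written. For instance, $p=5$, $k=2$, $a=b=0$, $(\alpha,\beta)=(2,3)$ gives the frequency $6x-6x$ for both.

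The paper's own proof has exactly this defect. It cites Lemma~\ref{lemmaofF}(a) to claim that the two cosines cannot be constant simultaneously, which is false. Its last sentence then silently replaces the second cosine by $\cos(\omega^+_{p-k,k,b,a}(\alpha x,\beta x))$, and this does not rescue it. Under the hypothesis, $\omega^+_{p-k,k,b,a}(\alpha x,\beta x)=2(k-2a)\beta x$. In part (a) this vanishes when $a=k/2$ (i.e.\ $\alpha=0$, which the hypotheses allow). In both parts it is merely the ``$+$'' partner of the same term whose ``$-$'' frequency vanishes, so it says nothing about whether the constant term of $\mathcal{J}$ can cancel. The proof of Theorem~\ref{p=1,oddp} uses the false $\omega^-$ version verbatim.

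Your repair is the statement Theorem~\ref{p=1,oddp} actually needs, and your parity argument is the intended content of the appeal to Lemma~\ref{lemmaofF}(a). State it with universal quantifiers, though, not as ``the $\mathcal{J}_2$ frequency built from the same pair $(a,b)$'' (no such single term exists), and cover both signs.

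For $k$ even, every frequency occurring in $\mathcal{J}_2$ has the form $c\alpha\pm d\beta$, where $c=p-k'-2m$ is even and $d=k'-2\ell$ is odd. The case $c=0$ covers the single-sum terms $\omega_{k',\ell}(\beta x)$. Suppose $c\alpha\pm d\beta=0$. Combined with $(p-k-2b)\alpha=(k-2a)\beta$ and $\beta\neq0$, this gives $c(k-2a)=\mp d(p-k-2b)$, an even number equal to an odd one.

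For $k$ odd, the mirror argument applies to all frequencies of $\mathcal{J}_1$: their $\alpha$-coefficient is odd, their $\beta$-coefficient is even, and the ratio $\alpha/\beta$ is odd/even.

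Finally, all expansion coefficients of $\mathcal{J}_1$ and $\mathcal{J}_2$ are positive and $\mathcal{J}=\mathcal{J}_1-\mathcal{J}_2$. So vanishing frequencies occur in only one of the two pieces, and the constant term is nonzero with no possible cancellation.
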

\begin{proof}
    First note that $(k-2a)/(p-k-2b)\in\mathcal{F}_{k, p}$.
    Then we observe that
    \[\omega^-_{k,p-k,a,b}(\beta x,\alpha x)=((k-2a)\beta-(p-k-2b)\alpha)x\]
    if $k$ is even, while
    \[\omega^-_{p-k,k,b,a}(\alpha x,\beta x)=((p-k-2b)\alpha-(k-2a)\beta)x\]
    if $k$ is odd. The statement $\alpha/\beta=(k-2a)/(p-k-2b)$ and Lemma \ref{l3.4a}(a) imply \[\cos(\omega^-_{k,p-k,a,b}(\beta x,\alpha x)) \text{ and } \cos(\omega^-_{p-k,k,b,a}(\alpha x,\beta x))\] cannot be constant on $\mathbb{R}$ simultaneously. In particular,
    $\cos(\omega^-_{k,p-k,a,b}(\beta x,\alpha x))=1$
    for $x\in\mathbb{R}$ and $\cos(\omega^+_{p-k,k,b,a}(\alpha x,\beta x))$ is not a constant on $\mathbb{R}$ if $k$ is even and vice versa. 
\end{proof}





Finally, we can prove the theorem we want. Let us first outline the idea (which is analogous to the even $p$ case). Firstly, we know that we can write
\[(\cos \alpha x-\cos  \beta x)^p=\sum_{k=0}^ta_k\cos d_kx\]
for some $t\in\mathbb{N}$. Here $a_0\in\mathbb{R}$, $a_k\in\mathbb{R}\setminus\{0\}$, $d_0=0$, and $d_k$ are distinct, where $k\in\{1, \ldots, t\}$.
Then we can define $S_k=\sum_{i=0}^ka_i$. Note that $S_t=0$ as usual. Then we can perform Abel's summation argument (See Theorem \ref{thm3.1} for reference). The problem then boils down to whether $a_0=0$ which is our goal of the upcoming investigation.

To explain intuitively, the set $\mathcal{F}_p$ we define characterises ratios $\alpha/\beta$ for which certain cosine frequencies cancel, producing a constant term in the expansion of $\mathcal{J}$.

\begin{proof}[Proof of Theorem \ref{p=1,oddp}]
    Without loss of generality, assume that $\beta\neq 0$. By the substitution rule, we have
    $\mathcal{I}_{p,1}(\alpha,\beta)=\mathcal{I}_{p,1}(\alpha/\beta,1)$
    for every $\alpha\in\mathbb{R}$. Moreover, since cosine is even,
    $\mathcal{I}_{p,1}(\alpha,\beta)=\mathcal{I}_{p,1}(\lvert\alpha\rvert,\lvert\beta\rvert)$. Hence it suffices to consider only the case $\alpha/\beta\geq 0$ and $\alpha\geq 0$. In addition, if $\alpha/\beta>1$, then we have
    $\mathcal{I}_{p,1}(\alpha/\beta,1)=-\mathcal{I}_{p,1}(\beta/\alpha,1)$.
    So we can focus only on the case $0\leq\alpha/\beta<1$.

($\Leftarrow$) First suppose $\alpha/\beta\in\mathcal F_p$. By Lemma \ref{l3.7}, there exist
$
k\in\{0,\ldots,(p-1)/2\}$, $m,\ell\in\mathbb Z$
with $0\le m\le k/2$ and $0\le\ell\le(p-k-1)/2$ such that
\[\frac{\alpha}{\beta}=\frac{k-2m}{p-k-2\ell}.\]

\textit{Case 1: $k$ even.}
If $k=0$, then $\alpha/\beta=0$. Expanding $(1-\cos x)^p$ gives
\[(1-\cos x)^p=a_0+\sum_{k=1}^t a_k\cos d_kx,\quad a_0\ne 0.\]
Assume $k\ge2$. If $m=k/2$, then again $\alpha/\beta=0$.  
If $0\le m\le(k-2)/2$, then by Lemma \ref{l3.8},
$\cos(\omega^-_{k,p-k,m,\ell}(\beta x,\alpha x))=1$
for all $x\in\mathbb R$, while
$\cos(\omega^-_{p-k,k,\ell,m}(\alpha x,\beta x))$
is not constant. Hence the trigonometric expansion $\mathcal{J}_1$ (only) contains a positive constant term.

\textit{Case 2: $k$ odd.}
In this case $0\le m\le(k-1)/2$ and $0\le\ell\le(p-k-2)/2$. Again by Lemma \ref{l3.8},
$\cos(\omega^-_{p-k,k,\ell,m}(\alpha x,\beta x))=1$
for all $x$, while the other cosine is not constant. Thus the expansion $\mathcal{J}_2$ contains a negative constant term.


($\Rightarrow$) Conversely, if $\alpha/\beta\notin\mathcal F_p$, then
\[
\frac{\alpha}{\beta}\neq\frac{k-2m}{p-k-2\ell}
\]
for every admissible $k,m,\ell$. Hence none of the frequencies vanish, so the expansion
\[
\left(\cos\left(\frac{\alpha}{\beta}x\right)-\cos x\right)^p
=
\sum_{k=1}^t a_k\cos d_kx
\]
contains no constant term by Lemma \ref{l3.7}.
\end{proof}

\subsection{Cases $q\ge 2$}

Now we consider a larger $q$. First we are able to establish a sufficient condition on $p, q$ so that $\mathcal{I}_{p, q}(\alpha, \beta)$ is divergent.

\begin{theorem}\label{divq>=2}
    Let $\alpha, \beta\in\mathbb{R}$ with $|\alpha|\ne|\beta|$. If $p, q\in\mathbb{N}$ such that $q\ge 2p+1$, then $\mathcal{I}_{p, q}(\alpha, \beta)$ diverges.
\end{theorem}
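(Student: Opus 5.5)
The divergence will come entirely from the singularity at $x=0$. The behaviour at infinity is irrelevant: for $q\ge 2$ the tail $\int_1^\infty f_{p,q}[\alpha,\beta](x)\,dx$ converges absolutely, since $|f_{p,q}[\alpha,\beta](x)|\le 2^p/x^q$.

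As in the proof of Theorem~\ref{thm3.1}, we first normalise. Because cosine is even and $|\alpha|\ne|\beta|$, we may assume $\alpha,\beta\ge 0$. Swapping $\alpha$ and $\beta$ only multiplies the integrand by $(-1)^p$, so we may also take $0\le\alpha<\beta$. The substitution $x\mapsto x/\beta$ then gives
\[
\mathcal{I}_{p,q}(\alpha,\beta)=\beta^{q-1}\,\mathcal{I}_{p,q}(\alpha/\beta,1),
\]
so it suffices to treat $\beta=1$ and $0\le\alpha<1$.

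Near the origin we use the Taylor expansion from the proof of Lemma~\ref{boundary0}:
\[
(\cos\alpha x-\cos x)^p=\left(\frac{1-\alpha^2}{2}\right)^p x^{2p}+O(x^{2p+2}).
\]
Set $c:=((1-\alpha^2)/2)^p>0$. There is then some $\delta\in(0,\pi/2]$ such that $(\cos\alpha x-\cos x)^p\ge \tfrac{c}{2}x^{2p}$ for all $x\in(0,\delta]$. It follows that
\[
\int_\varepsilon^{\delta} f_{p,q}[\alpha,1](x)\,dx\ \ge\ \frac{c}{2}\int_\varepsilon^\delta x^{2p-q}\,dx\longrightarrow\infty\quad(\varepsilon\to0^+),
\]
because $q-2p\ge1$ makes $x^{2p-q}$ non-integrable at $0$. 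Since the integrand is positive on $(0,\delta]$, this shows $\int_0^\delta f_{p,q}[\alpha,1]$ diverges to $+\infty$.

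Alternatively, Lemma~\ref{lem.dec} gives positivity of the integrand on $(0,\pi/2]$ directly. Combined with the limit $x^{q-2p}f_{p,q}[\alpha,1](x)\to c$ and the limit comparison test, it yields the same conclusion.

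Finally, we combine the pieces. On $[\delta,\infty)$ the integral converges, and on $(0,\delta]$ it diverges to $+\infty$, so $\mathcal{I}_{p,q}(\alpha,\beta)$ diverges. There is no real obstacle here. The only points needing care are checking that $c\neq0$, which uses $|\alpha|\ne|\beta|$, and handling the boundary case $\alpha=0$, which the Taylor expansion covers uniformly.
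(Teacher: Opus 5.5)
Your proof is correct, but it takes a different and more direct route than the paper.

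After the same normalisation to $\beta=1$, $0\le\alpha<1$, the paper uses Lemma~\ref{lem.dec}. That lemma gives positivity and monotonicity of $f_{p,q}[\alpha,1]$ on $(0,\pi/2]$, and it relies in turn on the inequality of Lemma~\ref{basiclem}. From it the paper gets $\int_{1/(k+1)}^{1/k} f_{p,q}[\alpha,1]\,dx\ge f_{p,q}[\alpha,1](1/k)\bigl(\tfrac1k-\tfrac1{k+1}\bigr)$, which reduces the question to the divergence of a series. It then settles the series in two cases: for $q\ge 2p+2$ the terms do not tend to $0$, and for $q=2p+1$ it uses limit comparison with the harmonic series.

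You instead compare the integrand directly with $\tfrac{c}{2}x^{2p-q}$ on a small interval $(0,\delta]$, using only the leading Taylor coefficient. This has three advantages:
\begin{enumerate}
\item[(i)] It needs no monotonicity at all, since positivity near $0$ follows from the expansion.
\item[(ii)] It treats $q=2p+1$ and $q\ge 2p+2$ together.
\item[(iii)] It states explicitly that the tail $\int_1^\infty$ converges, which the paper leaves tacit.
\end{enumerate}

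The paper's discretised version produces an explicit lower-bound series and follows the template it reuses for the sine family in Section~\ref{sec6}. For this statement, though, your comparison-test argument is shorter and depends on fewer auxiliary lemmas.
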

\begin{proof}
By the substitution rule and the fact that cosine is an even function, it suffices to consider only the case $\beta=1$ and $\alpha\ge 0$. Moreover, if $\alpha>1$, then we have the relation 
    \[\mathcal{I}_{p, q}(\alpha, 1)=(-1)^p\alpha^{q-1}\mathcal{I}_{p, q}(1/\alpha, 1).\]
    Hence we can focus only on the case $0\le \alpha<1$.
    
     We will show that $\int_0^1f$ is divergent. First $f$ is positive and decreasing on $(0, \pi/2]$ according to Lemma \ref{lem.dec}. Thus, for each $k\in\mathbb{N}$, we have
    \begin{align*}
        \int_{\frac{1}{k+1}}^{\frac{1}{k}}f_{p, q}[\alpha, 1](x)dx &\ge f_{p, q}[\alpha, 1]\left(\frac{1}{k}\right)\left(\frac{1}{k}-\frac{1}{k+1}\right)
        \\
        &=\left(\cos\left(\frac{\alpha}{k}\right)-\cos\left(\frac{1}{k}\right)\right)^p\frac{k^{q-1}}{k+1}>0.
    \end{align*}
    Hence we find
    \begin{align}\label{series}
        A_k:=\int_{\frac{1}{k+1}}^1f_{p, q}[\alpha, 1](x)dx\ge\sum_{n=1}^k\left(\cos\left(\frac{\alpha}{n}\right)-\cos\left(\frac{1}{n}\right)\right)^p\frac{n^{q-1}}{n+1}.
    \end{align}
    We will show that the sum in the right-hand side of \eqref{series} diverges as $k\to\infty$.
    
    {\it Case 1:} $q\ge 2p+2$. First by Taylor expansion around $x=0$, we see that
        \[\cos\left(\frac{\alpha}{n}\right)-\cos\left(\frac{1}{n}\right)=\frac{1-\alpha^2}{2n^2}+O\left(\frac{1}{n^4}\right).\]
    Thus we have
    \begin{align*}
        \lim_{n\to\infty}\left(\cos\left(\frac{\alpha}{n}\right)-\cos\left(\frac{1}{n}\right)\right)^p\frac{n^{q-1}}{n+1}=
        \begin{cases}
      \displaystyle\left(\frac{1-\alpha^2}{2}\right)^p  &\text{if } q=2p+2,\\\\
       \infty &\text{if } q>2p+2.
        \end{cases}
    \end{align*}
    This means $\lim_{k\to\infty}A_k$ is divergent.
    
    {\it Case 2:}  $q=2p+1$. 
    We apply the limit comparison test, with the divergent series  $\sum_{n\in\mathbb{N}}1/n^{2p-q+2}=\sum_{n\in\mathbb{N}}1/n$:
    \begin{align*}
        &\lim_{n\to\infty}n^{2p-q+2}\left(\cos\left(\frac{\alpha}{n}\right)-\cos\left(\frac{1}{n}\right)\right)^p\frac{n^{q-1}}{n+1} \\
        &\qquad=\lim_{n\to\infty}\left(n^2\left(\cos\left(\frac{\alpha}{n}\right)-\cos\left(\frac{1}{n}\right)\right)\right)^p\frac{n}{n+1}=\left(\frac{1-\alpha^2}{2}\right)^p>0.
    \end{align*}
    So $\lim_{k\to\infty} A_k$ diverges.
    Hence the proof is complete.
\end{proof}

This condition actually turns out to be necessary, as we shall see below.

\begin{theorem}\label{compar}
     Let $\alpha, \beta\in\mathbb{R}$ such that $|\alpha|\ne|\beta|$. If $p, q\in\mathbb{N}$ with $2\le q\le 2p$, then $\mathcal{I}_{p, q}(\alpha, \beta)$ exists.
\end{theorem}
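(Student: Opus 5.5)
The argument is a direct comparison test at each endpoint; nothing beyond Lemma \ref{boundary0} is needed. As in the proofs above, the substitution rule and the evenness of cosine reduce us to $\beta=1$ and $\alpha\ge 0$, $\alpha\ne 1$, although the argument below works verbatim for general $\alpha,\beta$. We split
\[
\mathcal{I}_{p,q}(\alpha,\beta)=\int_0^1 f_{p,q}[\alpha,\beta](x)\,dx+\int_1^\infty f_{p,q}[\alpha,\beta](x)\,dx
\]
and show that each piece converges absolutely.

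\emph{Near $0$.} Since $q\le 2p$, Lemma \ref{boundary0} gives $f_{p,q}[\alpha,\beta](x)=O(1)$ as $x\to0$. In fact $f_{p,q}[\alpha,\beta]$ extends continuously to $x=0$, with value $((\beta^2-\alpha^2)/2)^p$ or $0$. Hence $\int_0^1 f_{p,q}[\alpha,\beta]$ is a proper Riemann integral of a bounded continuous function, so there is no singularity to control.

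\emph{Near $\infty$.} We have $|\cos\alpha x-\cos\beta x|\le 2$, so
\[
|f_{p,q}[\alpha,\beta](x)|\le \frac{2^p}{x^q}\quad\text{for } x\ge1.
\]
Since $q\ge2$, the integral $\int_1^\infty x^{-q}\,dx$ converges. By the comparison test, $\int_1^\infty f_{p,q}[\alpha,\beta]$ converges absolutely.

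Combining the two pieces, $\mathcal{I}_{p,q}(\alpha,\beta)$ exists. There is no real obstacle here. The only point needing care is the endpoint $0$, and that is precisely where the hypothesis $q\le 2p$ enters through the Taylor expansion in Lemma \ref{boundary0}. Together with Theorem \ref{divq>=2}, this shows that for $q\ge2$ convergence holds exactly when $q\le 2p$.
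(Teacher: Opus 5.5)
Your proof is correct and matches the paper's argument: split at $x=1$, use Lemma~\ref{boundary0} for boundedness (indeed a continuous extension) near $0$, and compare with $2^p/x^q$ on $[1,\infty)$ using $q\ge 2$. The preliminary reduction to $\beta=1$, $\alpha\ge 0$ is harmless but unnecessary, as you note yourself.
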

\begin{proof}
    First we can decompose
    \[
\mathcal I_{p,q}(\alpha,\beta)
=\int_0^1 f_{p,q}[\alpha,\beta](x)\,dx+\int_1^\infty f_{p,q}[\alpha,\beta](x)\,dx.
\]
The first integral converges by Lemma~\ref{boundary0}. Now for $x\ge 1$, we see that
\[
|f_{p,q}[\alpha,\beta](x)|
=\frac{|\cos(\alpha x)-\cos(\beta x)|^p}{x^q}
\le \frac{2^p}{x^q},
\]
and since $q>1$, we have $\int_1^\infty x^{-q}\,dx<\infty$. Hence the second integral converges by the comparison test. 
\end{proof}


\section{Exact computations}\label{sec4}

We now try to calculate the precise values of $\mathcal{I}_{p, q}(\alpha, \beta)$ for the convergent cases.

\subsection{Another expansion of $\mathcal{J}$}\label{sec4.1}

To streamline the presentation, we introduce several pieces of notation. Fix $\alpha, \beta, p, q$. First denote
\begin{align*}
    \Omega_1&:=\{(k, \ell, m)\in(\mathbb{N}\cup\{0\})^3 : 0\le k\le p-1, 0\le \ell\le p-k-1, 0\le m\le k\},\\
    \Omega_2&:=\{(p, \ell, 0)\in(\mathbb{N}\cup\{0\})^3 : 0\le \ell\le p-1\},
\end{align*}
and $\Omega:=\Omega_1\cup\Omega_2$. Write
\begin{align*}
    \Lambda_{k, \ell, m}:=\begin{cases}
        \displaystyle\frac{(-1)^k}{2^{p-1}}\binom{p}{k}\binom{p-k-1} {\ell}\binom{k}{m}  &\text{if } k\in\{0, 1, \ldots, p-1\},\\\\
        \displaystyle\frac{(-1)^p}{2^{p-1}}\binom{p-1}{\ell} &\text{if }k=p,
    \end{cases}
\end{align*}
and
\begin{align*}
    \chi_{k, \ell, m}(x):=\begin{cases}
        (p-k-2\ell)\alpha x+(k-2m)\beta x &\text{if } k\in\{0, 1, \ldots, p-1\},\\
        (p-2\ell)\beta x &\text{if } k=p.
    \end{cases}
\end{align*}
From this, we additionally define  $\tilde{\Omega}:=\{(k, \ell, m)\in\Omega : \chi_{k, \ell, m}(1)\ne 0\}$. We also label the elements in $\tilde{\Omega}$ as $r_1, r_2, \ldots, r_N$ (the order does not matter).  Then for $1\le n\le N$, write
\begin{align*}
    \Xi_n:=\sum_{j=1}^n\Lambda_{r_j}(\chi_{r_j}(1))^{q-1}.
\end{align*}

As mentioned before, below we provide an alternative way of expanding $\mathcal{J}$.

\begin{lemma}[\cite{AL}, Corollary 3.2 and Lemma 4.1]\label{long}
    Let $x, y\in\mathbb{R}$. Let $n, n'\in\mathbb{N}\cup\{0\}$ such that $n\ge 1$. Then we have
     \[\cos^nx=\frac{1}{2^{n-1}}\sum_{k=0}^{n-1}\binom{n-1}{k}\cos((n-2k)x)\]
     and
    \[\cos^nx\cos^{n'}y=\frac{1}{2^{n+n'-1}}\sum_{\ell=0}^{n-1}\sum_{m=0}^{n'}\binom{n-1}{\ell}\binom{n'}{m}\cos((n-2\ell)x+(n'-2m)y).\]
    Consequently, combining this with the binomial expansion theorem, we yield
    \[\mathcal{J}(x)=\sum_{(k, \ell, m)\in\Omega}\Lambda_{k, \ell, m}\cos(\chi_{k, \ell, m}(x)).\]
\end{lemma}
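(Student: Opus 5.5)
The statement has two parts, and I will prove them in order. First comes the one-variable power-reduction formula $\cos^n x=2^{1-n}\sum_{k=0}^{n-1}\binom{n-1}{k}\cos((n-2k)x)$. Then comes the product formula for $\cos^n x\cos^{n'}y$. Finally, the expansion of $\mathcal{J}$ follows by inserting both into the binomial theorem.

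For the first formula I plan to use induction on $n\ge1$. The case $n=1$ is trivial. For the inductive step I write $\cos^{n+1}x=\cos x\cdot\cos^n x$ and use $\cos x\cos\theta=\tfrac12(\cos(\theta+x)+\cos(\theta-x))$. This produces the frequencies $(n+1-2k)x$ and $(n-1-2k)x$. Reindexing the second sum by $k\mapsto k-1$ and collecting terms with Pascal's rule $\binom{n-1}{k}+\binom{n-1}{k-1}=\binom{n}{k}$ gives the claimed form. The endpoint terms need a separate check: the term with $k=n$ has frequency $-n$, and since cosine is even it is consistent with the extended range $0\le k\le n$. An alternative I would keep in reserve is to write $\cos x=(e^{ix}+e^{-ix})/2$, so that $2^n\cos^n x=\sum_{j=0}^n\binom{n}{j}\cos((n-2j)x)$. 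Pairing $j$ with $n-j$ and using $\binom{n}{j}=\binom{n-1}{j}+\binom{n-1}{j-1}$ then splits this sum into two copies of $\sum_j\binom{n-1}{j}\cos((n-2j)x)$, which gives the factor $2^{1-n}$ directly. I expect this second route to be the cleanest.

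For the product formula I use the representation just proved for $\cos^n x$, which requires $n\ge1$. For $\cos^{n'}y$ I use the full exponential form $\cos^{n'}y=2^{-n'}\sum_{m=0}^{n'}\binom{n'}{m}e^{i(n'-2m)y}$. Taking real parts needs care here, because a product of cosines is not the real part of a product of exponentials. So instead I take the symmetric real form $\cos^{n'}y=2^{-n'}\sum_{m}\binom{n'}{m}\cos((n'-2m)y)$, which holds because the imaginary parts cancel under $m\leftrightarrow n'-m$. I then multiply it term by term with the first formula and apply the product-to-sum identity. This yields $2^{-(n+n'-1)}\cdot\tfrac12\sum\binom{n-1}{\ell}\binom{n'}{m}\bigl[\cos(A+B)+\cos(A-B)\bigr]$, where $A=(n-2\ell)x$ and $B=(n'-2m)y$. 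The substitution $m\mapsto n'-m$ sends $B$ to $-B$ and preserves $\binom{n'}{m}$, so the $\cos(A-B)$ sum equals the $\cos(A+B)$ sum. This cancels the $\tfrac12$ and gives the stated identity. I expect this symmetrization bookkeeping, namely getting the constant exactly $2^{-(n+n'-1)}$ without double counting, to be the main point needing care.

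For the expansion of $\mathcal{J}$, I expand $(\cos\alpha x-\cos\beta x)^p=\sum_{k=0}^p\binom pk(-1)^k\cos^{p-k}\alpha x\cos^k\beta x$. For $0\le k\le p-1$ we have $p-k\ge1$, so I apply the product formula with $n=p-k$ and $n'=k$. This gives the coefficients $\frac{(-1)^k}{2^{p-1}}\binom pk\binom{p-k-1}{\ell}\binom km$ and the frequencies $(p-k-2\ell)\alpha x+(k-2m)\beta x$, which are exactly $\Lambda_{k,\ell,m}$ and $\chi_{k,\ell,m}$ on $\Omega_1$. For $k=p$ I use the one-variable formula for $\cos^p\beta x$, which produces $\Omega_2$ with $\Lambda_{p,\ell,0}=\frac{(-1)^p}{2^{p-1}}\binom{p-1}{\ell}$. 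Summing the two contributions gives the claimed expansion over $\Omega$.
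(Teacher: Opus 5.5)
Your proposal is correct and follows the paper's approach. The paper only cites \cite{AL} for the two trigonometric identities, and your induction/exponential and symmetrization arguments are valid self-contained proofs of them. The paper then obtains the expansion of $\mathcal{J}$ exactly as you do: it applies the product formula with $n=p-k\ge1$ for $0\le k\le p-1$ (giving $\Omega_1$), and the one-variable formula for $k=p$ (giving $\Omega_2$).

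One cosmetic slip in your induction: the $k=n$ endpoint term has frequency $1-n$, not $-n$. It comes directly from the reindexed sum, so no appeal to evenness of cosine is needed once you use $\binom{n-1}{n}=\binom{n-1}{-1}=0$ in Pascal's rule.
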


\subsection{Case $q=1$}

As we know from the previous section, the operator  $\mathcal{I}_{p, q}(\alpha, \beta)$ is always divergent for $q=1$ when $p$ is even. Thus our attention would only be in the odd $p$ case.

\begin{theorem}\label{compuq=1}
    Let $\alpha, \beta\in\mathbb{R}$ with $|\alpha|\ne|\beta|$, and let $p\in\mathbb{N}$ be odd. Suppose both $|\alpha/ \beta|$ and $|\beta/\alpha|$ are not in $\mathcal{F}_p$. Then 
    \begin{align*}
        \mathcal{I}_{p, 1}(\alpha, \beta)=\sum_{n=1}^{N-1}\Xi_n\log\left|\frac{\chi_{r_{n+1}}(1)}{\chi_{r_{n}}(1)}\right|.
    \end{align*}
\end{theorem}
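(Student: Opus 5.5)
We start from the expansion of Lemma~\ref{long},
\[
\mathcal{J}(x)=\sum_{(k,\ell,m)\in\Omega}\Lambda_{k,\ell,m}\cos(\chi_{k,\ell,m}(x)),
\]
and split $\Omega$ into $\tilde{\Omega}$ and its complement $\Omega\setminus\tilde{\Omega}$. On the complement we have $\chi_{k,\ell,m}\equiv 0$, so those indices contribute the constant term $a_0:=\sum_{(k,\ell,m)\in\Omega\setminus\tilde\Omega}\Lambda_{k,\ell,m}$.

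\textbf{Step 1: $a_0=0$.} This is the only step that uses the hypothesis $|\alpha/\beta|,|\beta/\alpha|\notin\mathcal{F}_p$. The proof of Theorem~\ref{p=1,oddp} (the ($\Rightarrow$) direction, via Lemma~\ref{l3.7} and the expansion \eqref{decomJ}) shows that the cosine expansion of $\mathcal{J}$ has no constant term under this hypothesis. The expansion of Lemma~\ref{long} represents the same trigonometric polynomial, so it must have the same constant term. Indeed, coefficients of $\cos(dx)$ with $d\ge0$ distinct are unique, by linear independence of $\{\cos dx\}_{d\ge 0}$. Hence $a_0=0$.

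\textbf{Step 2: consequences at $x=0$.} Because $\chi_{r}(x)=\chi_{r}(1)x$ is linear, we may write
\[
\mathcal{J}(x)=\sum_{j=1}^N\Lambda_{r_j}\cos(\chi_{r_j}(1)x).
\]
Evaluating at $x=0$ gives $\Xi_N=\sum_{j=1}^N\Lambda_{r_j}=\mathcal{J}(0)=0$. Here $q=1$, so $\Xi_n=\sum_{j\le n}\Lambda_{r_j}$, with exponent $q-1=0$.

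\textbf{Step 3: Abel summation.} Exactly as in the proof of Theorem~\ref{thm3.1}, Abel summation with partial sums $\Xi_n$ and $\Xi_N=0$ gives
\[
\mathcal{J}(x)=\sum_{n=1}^{N-1}\Xi_n\bigl(\cos(\chi_{r_n}(1)x)-\cos(\chi_{r_{n+1}}(1)x)\bigr).
\]
We then divide by $x$ and integrate over $[\varepsilon,R]$. This is a finite sum, so it can be integrated term by term. Each term has nonzero frequencies $\chi_{r_n}(1)$ and $\chi_{r_{n+1}}(1)$, so by Lemma~\ref{basic1} each integral converges as $\varepsilon\to0$, $R\to\infty$, with value $\log|\chi_{r_{n+1}}(1)/\chi_{r_n}(1)|$. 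This yields the stated formula.

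\textbf{Degenerate frequencies and the main obstacle.} If two labels share the same frequency up to sign, the corresponding term is $\log 1=0$, so no harm results. The summation needs no ordering, since Abel summation works for any labelling. The only real obstacle is Step~1, namely transferring the no-constant-term conclusion between the two expansions. I would settle it by the uniqueness argument above, using $L^2$ averages $\lim_{R\to\infty}\frac1R\int_0^R\mathcal{J}(x)\,dx$. This average equals the constant term in either expansion, so both constant terms coincide.
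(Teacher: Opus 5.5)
Your proposal is correct and follows essentially the same route as the paper's proof: the constant term vanishes by the ($\Rightarrow$) direction of Theorem~\ref{p=1,oddp}, then $\Xi_N=\mathcal{J}(0)=0$, then Abel summation and Lemma~\ref{basic1}. Your uniqueness-of-constant-term argument spells out the transfer between the two expansions, which the paper leaves implicit; in fact, under the hypothesis one can check directly that every $\chi_{k,\ell,m}(1)\neq 0$, so $\Omega\setminus\tilde{\Omega}$ is empty and Step~1 holds automatically.
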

\begin{proof}
    As we have verified in Theorem \ref{p=1,oddp}, the assumption guarantees that the sum of all the terms which satisfy $(k,\ell ,m)\in\Omega\setminus\tilde{\Omega}$ equals $0$. We first write
    \begin{align*}
        \mathcal{I}_{p, 1}(\alpha, \beta)=\int_0^{\infty}\sum_{(k, \ell, m)\in\tilde{\Omega}}\Lambda_{k, \ell, m}\frac{\cos (\chi_{k, \ell, m}(x))}{x}dx.
    \end{align*}
    For the next step, we express $\mathcal{I}_{p, 1}(\alpha, \beta)$ in the Abel's summation style. Note that $\Xi_N=\mathcal{J}(0)=0$. Combining this idea with Lemma \ref{basic1}, we finally yield
    \begin{align*}
        \mathcal{I}_{p, 1}(\alpha, \beta) &=\sum_{n=1}^{N-1}\Xi_n\int_0^{\infty}\frac{\cos(\chi_{r_n}(x))-\cos(\chi_{r_{n+1}}(x))}{x}dx=\sum_{n=1}^{N-1}\Xi_n\log\left|\frac{\chi_{r_{n+1}}(1)}{\chi_{r_{n}}(1)}\right|,
    \end{align*}
    as desired.
\end{proof}

\subsection{Case $q\ge 2$}

The main tool for handling these cases is Lemma \ref{l4}, which transforms $\mathcal{I}_{p, q}(\alpha, \beta)$ into a form amenable to Lemma \ref{basic1}.

\begin{theorem}\label{mainthmcompute}
    Let $\alpha, \beta\in\mathbb{R}$ with $|\alpha|\ne|\beta|$. Let $p, q\in\mathbb{N}$ such that $2\le q\le 2p$.  
    \begin{enumerate}
        \item[(a)] If $q$ is even, then
        \begin{align*}
            \mathcal{I}_{p, q}(\alpha, \beta)=\frac{\pi(-1)^{\frac{q}{2}}}{2(q-1)!}\sum_{n=1}^N\Lambda_{r_n}|\chi_{r_n}(1)|^{q-1}.
        \end{align*}
        \item[(b)] If $q$ is odd, then
        \begin{align*}
            \mathcal{I}_{p, q}(\alpha, \beta)=\frac{(-1)^{\frac{q-1}{2}}}{(q-1)!}\sum_{n=1}^{N-1}\Xi_n\log\left|\frac{\chi_{r_{n+1}}(1)}{\chi_{r_{n}}(1)}\right|.
        \end{align*}
    \end{enumerate}
\end{theorem}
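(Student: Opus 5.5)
Apply Lemma \ref{l4} with $f=\mathcal{J}$ and $\ell=q$. This turns $\mathcal{I}_{p,q}(\alpha,\beta)$ into $\frac{1}{(q-1)!}\int_0^\infty \mathcal{J}^{(q-1)}(x)\,x^{-1}\,dx$. The resulting integral is then evaluated termwise using the expansion of Lemma \ref{long} and the two formulas of Lemma \ref{basic1}.

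\textbf{Checking the hypotheses.} By the Taylor expansion in Lemma \ref{boundary0}, $x=0$ is a zero of $\mathcal{J}$ of order $2p\ge q$. Since $q\ge 2$, we have $2p\ge 2$, so the order condition $k\ge 2$ with $1\le \ell=q\le k$ holds. By Lemma \ref{long},
\[\mathcal{J}(x)=\sum_{(k,\ell,m)\in\Omega}\Lambda_{k,\ell,m}\cos(\chi_{k,\ell,m}(x)),\]
and each $\chi_{k,\ell,m}(x)=\chi_{k,\ell,m}(1)\,x$ is linear. Hence every derivative of $\mathcal{J}$ is a finite trigonometric sum, so it is bounded. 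Terms indexed by $\Omega\setminus\tilde\Omega$ are constants and disappear after at least one differentiation. Since $q-1\ge 1$, we get
\[\mathcal{J}^{(q-1)}(x)=\sum_{n=1}^N\Lambda_{r_n}\,c_n^{\,q-1}\,\cos^{(q-1)}(c_n x),\qquad c_n:=\chi_{r_n}(1)\neq 0.\]
The remaining hypothesis of Lemma \ref{l4}, existence of $\int_0^\infty \mathcal{J}^{(q-1)}(x)x^{-1}dx$, is checked separately in each parity case below.

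\textbf{(a) $q$ even.} Then $q-1$ is odd and $\cos^{(q-1)}(y)=(-1)^{q/2}\sin y$. Each term is therefore $(-1)^{q/2}\Lambda_{r_n}c_n^{q-1}\sin(c_nx)/x$, which is integrable by Lemma \ref{basic1} and contributes $\frac{\pi}{2}\operatorname{sgn}(c_n)$. So existence of the integral is automatic. Because $q-1$ is odd, $c_n^{q-1}\operatorname{sgn}(c_n)=|c_n|^{q-1}$, and summing over $n$ gives formula (a) after dividing by $(q-1)!$.

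\textbf{(b) $q$ odd.} Then $\cos^{(q-1)}(y)=(-1)^{(q-1)/2}\cos y$. Here the integrand is a sum of terms $\cos(c_nx)/x$, which are not individually integrable at $0$. The argument follows the Abel summation used in Theorem \ref{compuq=1}. With $\Xi_n=\sum_{j\le n}\Lambda_{r_j}c_j^{q-1}$, we have
\[\Xi_N=\sum_n \Lambda_{r_n}c_n^{q-1}=(-1)^{(q-1)/2}\mathcal{J}^{(q-1)}(0).\]
This vanishes because $q-1<2p$ and all derivatives of order below $2p$ vanish at $0$. One can also see it directly from the Taylor coefficient of $x^{q-1}$, using that $q-1$ is even. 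Summation by parts then gives
\[\mathcal{J}^{(q-1)}(x)=(-1)^{(q-1)/2}\sum_{n=1}^{N-1}\Xi_n\bigl(\cos(c_nx)-\cos(c_{n+1}x)\bigr).\]
Each difference is integrable against $1/x$ by Lemma \ref{basic1}, with value $\log|c_{n+1}/c_n|$. This proves existence of the integral and yields formula (b) after dividing by $(q-1)!$.

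The main point needing care is the cancellation $\Xi_N=0$ in case (b). It is what makes the integral converge at $0$, and it also supplies the last hypothesis of Lemma \ref{l4}. Both rest on $q-1<2p$, i.e.\ on $q\le 2p$.
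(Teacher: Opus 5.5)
Your proposal is correct and takes the same route as the paper. Like the paper, you apply Lemma \ref{l4} with $\ell=q$ to the expansion of $\mathcal{J}$ from Lemma \ref{long}, integrate the sine terms directly when $q$ is even, and for odd $q$ use Abel summation with $\Xi_N=0$ (coming from the zero of order $2p$ and $q-1<2p$) before applying Lemma \ref{basic1}. Your explicit check of the hypotheses of Lemma \ref{l4} is a useful addition, notably that in case (b) the summation-by-parts form is what establishes existence of $\int_0^\infty \mathcal{J}^{(q-1)}(x)x^{-1}\,dx$; the paper leaves this implicit.
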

\begin{proof}
    First we can expand $(\cos \alpha x-\cos\beta x)^p$ as $\mathcal{J}(x)$ in Lemma \ref{long}. Note that $\mathcal{J}$ is smooth with a zero of order $2p$ at $x=0$.\\
    Part (a). As $q$ is even, we have
    \begin{align*}
        \mathcal{J}^{(q-1)}(x)=\sum_{(k, \ell, m)\in\tilde{\Omega}}(-1)^{\frac{q}{2}}\Lambda_{k, \ell, m}(\chi_{k, \ell, m}(1))^{q-1}\sin(\chi_{k, \ell, m}(x)).
    \end{align*}
    Note that we can ignore the vanishing terms, i.e., terms where $\chi_{k, \ell, m}(1)=0$, because they disappear during the differentiation. Thus using Lemma \ref{l4} and Lemma \ref{basic1} respectively, we yield
    \begin{align*}
       \mathcal{I}_{p, q}(\alpha, \beta)&=\frac{(-1)^{\frac{q}{2}}}{(q-1)!}\sum_{(k, \ell, m)\in\tilde{\Omega}}\Lambda_{k, \ell, m}(\chi_{k, \ell, m}(1))^{q-1}\int_0^{\infty}\frac{\sin(\chi_{k, \ell, m}(x))}{x}dx\\
       &=\frac{\pi(-1)^{\frac{q}{2}}}{2(q-1)!}\sum_{(k, \ell, m)\in\tilde{\Omega}}\Lambda_{k, \ell, m}|\chi_{k, \ell, m}(1)|^{q-1}.
    \end{align*}
    Part (b). The first step is similar to (a). First we have
    \begin{align*}
    \mathcal{I}_{p, q}(\alpha, \beta) &=\frac{(-1)^{\frac{q-1}{2}}}{(q-1)!}\int_0^{\infty}\sum_{(k, \ell, m)\in\tilde{\Omega}}\Lambda_{k, \ell, m}(\chi_{k, \ell, m}(1))^{q-1}\frac{\cos(\chi_{k, \ell, m}(x))}{x}dx
    \end{align*}
    due to Lemma \ref{l4}.
    Now we can try to write the expression in an Abel summation fashion.  Note that \[0=\mathcal{J}^{(q-1)}(0)=(-1)^{\frac{q-1}{2}}\sum_{(k, \ell, m)\in\tilde{\Omega}}\Lambda_{k, \ell, m}(\chi_{k, \ell, m}(1))^{q-1}=(-1)^{\frac{q-1}{2}}\Xi_{N}\] because $\mathcal{J}$ has a zero of order $2p$ and $q-1\le 2p-1$. In the end, applying Lemma \ref{basic1} gives us the desired result.
\end{proof}

\begin{remark}
While the formula may appear complex at first glance, the underlying algorithm is relatively straightforward, and one can readily implement code to evaluate this integral.
\end{remark}

\begin{example}
    We provide two beautiful examples of the operator $\mathcal{I}_{p ,q}(\alpha, \beta)$ under specific inputs as shown below.
    \begin{gather*}
        \mathcal{I}_{3, 2}(5, 2)=\int_0^{\infty}\frac{(\cos 5x-\cos 2x)^3}{x^2}dx=-\frac{3\pi}{4},\\
        \mathcal{I}_{3, 5}(3, 1)=\int_0^{\infty}\frac{(\cos 3x-\cos x)^3}{x^5}dx=\frac{1}{32}(2401\log 7-4590\log 3).
    \end{gather*}
\end{example}

\section{Special case $\alpha=0$ and combinatorial relations}\label{sec5}

We now focus on the special case $\alpha=0$. First, we record the following specialisation of Lemma \ref{long}. For $p\in\mathbb{N}$, we have
		\begin{align}\label{1-cosx}
        \begin{split}
						        (1-\cos \beta x)^p&=\sum_{k=1}^p\left((-1)^k\sum_{m=0}^{\lfloor (p-k)/2\rfloor}\frac{1}{2^{2m+k-1}}{p\choose 2m+k}{2m+k\choose m}\right)\cos k \beta x\\
            &\qquad+\sum_{m=0}^{\lfloor p/2\rfloor}\frac{1}{2^{2m}}{p\choose 2m}{2m\choose m}.
        \end{split}
		\end{align}
Let us, in another way, write
\begin{equation}\label{anotherwaytowrite}
    (1-\cos \beta x)^p=\sum_{k=0}^pa_k\cos k\beta x,
\end{equation}
where $a_k$ is the respective coefficient of $\cos k\beta x$ for $k\in\{0,1,\ldots,p\}$.
Indeed, by \eqref{1-cosx} and a well-known combinatorial argument, we see that
\begin{equation}\label{a0}
    a_0=\sum_{m=0}^{\lfloor p/2\rfloor}\frac{1}{2^{2m}}\binom{p}{2m}\binom{2m}{m}=\frac{1}{2^p}\binom{2p}{p}.
\end{equation}

Our next goal is to find simpler forms of the other $a_k$'s.  First let us extract some properties of these coefficients. A trivial observation (which we have seen before) is that substituting $x=0$ gives 
\begin{equation}\label{easy}
    \sum_{k=0}^pa_k=0.
\end{equation}
Perhaps we can deduce some more interesting consequences as follows.

\begin{lemma}\label{diffmanytimes}
Let $p\in\mathbb{N}$. If $n\in\mathbb{N}$ is even such that $2\le n\le 2p-2$, then
    \[\sum_{k=1}^pk^na_k=0.\]
\end{lemma}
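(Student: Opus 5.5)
The plan is to differentiate the identity \eqref{anotherwaytowrite} an even number of times and evaluate at $x=0$, using the fact that $(1-\cos\beta x)^p$ vanishes to high order there. Taking $\beta=1$ is harmless, since the coefficients $a_k$ do not depend on $\beta$. Write $g(x):=(1-\cos x)^p=\sum_{k=0}^p a_k\cos kx$.

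First we record the order of vanishing at the origin. From $1-\cos x=\frac{x^2}{2}+O(x^4)$ we get $g(x)=\frac{x^{2p}}{2^p}+O(x^{2p+2})$. This is the same Taylor argument as in Lemma \ref{boundary0}. Since $g$ is smooth (indeed entire), $x=0$ is a zero of order exactly $2p$. Hence $g^{(n)}(0)=0$ for every $0\le n\le 2p-1$, and in particular for every even $n$ with $2\le n\le 2p-2$.

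Next we differentiate the cosine side term by term; this is legitimate because the sum is finite. For even $n=2j$ we have
\[
\frac{d^n}{dx^n}\cos kx=(-1)^{j}k^n\cos kx .
\]
Evaluating at $x=0$ gives $g^{(n)}(0)=(-1)^{n/2}\sum_{k=0}^p k^n a_k$. The $k=0$ term drops out because $n\ge 2$, and a constant has vanishing derivatives anyway. Comparing with the previous step yields $\sum_{k=1}^p k^n a_k=0$ for every even $n$ with $2\le n\le 2p-2$.

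There is no substantial obstacle. The only point needing care is that the order of vanishing is exactly $2p$ and not smaller. This follows from the expansion $1-\cos x=\frac{x^2}{2}\bigl(1+O(x^2)\bigr)$ raised to the $p$th power. The same argument also gives the identity for $n=2p-1$ and for every odd $n$, but those cases are trivial because odd derivatives of cosines vanish at $0$. For $n=2p$, evaluation gives the nonzero value $g^{(2p)}(0)=(2p)!/2^p$, which explains why the range in the statement stops at $2p-2$.
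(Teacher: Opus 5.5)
Your proof is correct and is essentially the paper's argument: differentiate \eqref{anotherwaytowrite} an even number $n$ of times and use that $(1-\cos x)^p$ has a zero of order $2p$ at the origin, so $0=(-1)^{n/2}\sum_{k=1}^p k^n a_k$. Your closing aside about odd $n$ is wrong as stated: for odd $n$ the differentiated identity carries $\sin kx$ and reduces to $0=0$ at $x=0$, so it says nothing about $\sum_{k} k^n a_k$, which is in general nonzero (for example, $p=2$, $n=1$ gives $-2+2\cdot\tfrac12=-1$).
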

\begin{proof}
    Taking the $n$th derivative on both sides of \eqref{anotherwaytowrite} with respect to $x$, we have
    \begin{align*}
        \frac{d^n}{dx^n}(1-\cos x)^p=(-1)^{n/2}\sum_{k=1}^pk^na_k\cos kx.
    \end{align*}
    As the function $(1-\cos x)^p$ has a zero of order $2p$ at $x=0$, we find
    \[0=\frac{d^n}{dx^n}(1-\cos x)^p\bigg|_{x=0}=(-1)^{n/2}\sum_{k=1}^pk^na_k\]
    as desired.
\end{proof}

From this, we are able to explicitly find the coefficients $a_k$'s as follows.

\begin{theorem}\label{rmk5.3}
    Let $a_k$'s be as defined in \eqref{anotherwaytowrite}. Then we have
     \[a_k=\begin{cases}
			-\dfrac{(2p)!}{2^p\displaystyle\prod_{2\le m\le p}(m^2-1)}&\text{if }k=1,\\
			\\
			(-1)^p\dfrac{(2p)!}{2^p\displaystyle\prod_{\substack{0\leq m\leq p\\m\neq k}}(k^2-m^2)}&\text{if }2\leq k\leq p.
		\end{cases}\]
\end{theorem}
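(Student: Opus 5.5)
The plan is to treat the coefficients $a_0,a_1,\ldots,a_p$ in \eqref{anotherwaytowrite} as satisfying a Vandermonde-type linear system in the squared frequencies $k^2$, and to solve it by Lagrange interpolation. Here $a_0$ is already known from \eqref{a0}. Combining \eqref{easy} with Lemma \ref{diffmanytimes}, I will record the $p$ relations
\[
\sum_{k=0}^p a_k\,(k^2)^j=0\qquad\text{for } j=0,1,\ldots,p-1 .
\]
For $j=0$ this is \eqref{easy}, with the convention $0^0=1$. For $1\le j\le p-1$ it is Lemma \ref{diffmanytimes} with $n=2j\in\{2,\ldots,2p-2\}$; the $k=0$ term contributes nothing there. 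By linearity, this gives $\sum_{k=0}^p a_k P(k^2)=0$ for every polynomial $P$ of degree at most $p-1$.

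Next, fix $k\in\{1,\ldots,p\}$ and choose
\[
P_k(t):=\prod_{\substack{1\le m\le p\\ m\ne k}}(t-m^2),
\]
which has degree $p-1$. It vanishes at every node $m^2$ with $m\ne 0,k$, so the identity collapses to $a_kP_k(k^2)+a_0P_k(0)=0$. Since $P_k(k^2)\neq0$, we can solve for $a_k$ to get
\[
a_k=-a_0\frac{P_k(0)}{P_k(k^2)}.
\]

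It then remains to evaluate the two values of $P_k$. First, $P_k(0)=\prod_{m\ne0,k}(-m^2)=(-1)^{p-1}(p!)^2/k^2$. Second, $P_k(k^2)=\prod_{m\ne 0,k}(k^2-m^2)$. Multiplying the denominator by $k^2=k^2-0^2$ absorbs the $m=0$ factor, so
\[
a_k=(-1)^p a_0\,\frac{(p!)^2}{\prod_{0\le m\le p,\ m\ne k}(k^2-m^2)}.
\]
Substituting $a_0=(2p)!/(2^p(p!)^2)$ from \eqref{a0} cancels the $(p!)^2$ and yields the stated formula for $2\le k\le p$; this formula is in fact valid for every $k\ge1$.

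For $k=1$, I will rewrite the product instead of using a separate argument. The $m=0$ factor equals $1$, and for $2\le m\le p$ we have $1-m^2=-(m^2-1)$, which produces a sign $(-1)^{p-1}$. Combined with the prefactor $(-1)^p$, this gives exactly $-(2p)!/\bigl(2^p\prod_{2\le m\le p}(m^2-1)\bigr)$.

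There is no genuine obstacle in this argument. The only points needing care are the sign bookkeeping and making sure the $j=0$ relation \eqref{easy} is included, since it is the only equation involving $a_0$ and hence what links the system to the known value \eqref{a0}.
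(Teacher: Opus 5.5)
Your proof is correct and follows essentially the paper's route. The same $p$ relations from \eqref{easy} and Lemma~\ref{diffmanytimes}, together with $a_0$ from \eqref{a0}, form a Vandermonde system in the nodes $k^2$; the paper solves it by Cramer's rule with explicit Vandermonde determinants, while you test against the polynomial $P_k$ that vanishes at every node except $0$ and $k^2$, which avoids the cofactor and sign bookkeeping and shows at once that the $2\le k\le p$ formula already covers $k=1$.
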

\begin{proof}
    According to \eqref{easy} and Lemma \ref{diffmanytimes}, we obtain a system of $p$ linear equations
    \begin{align*}
        \begin{pmatrix}
            1 & 1 & \cdots & 1\\
            1 & 2^2 & \cdots & p^2\\
            \vdots & \vdots & \ddots & \vdots\\
            1 & 2^{2p-2} & \cdots & p^{2p-2}
        \end{pmatrix}
        \begin{pmatrix}
            a_1 \\ a_2 \\ \vdots \\ a_p
        \end{pmatrix}=\begin{pmatrix}
            -a_0 \\ 0 \\ \vdots \\ 0
        \end{pmatrix}.
    \end{align*}
    For convenience, we write the system as $P\mathbf{a}=\mathbf{b}$. We will solve this system using the Cramer rule. First note that the coefficient matrix $P$ appears to be the Vandermonde matrix. In this case, we know that
    \[\det P=\prod_{1\le n<m\le p}(m^2-n^2).\]
    Next we see that
    \begin{align*}
        \det\begin{pmatrix}
            -a_0 & 1 & \cdots & 1\\
            0 & 2^2 & \cdots & p^2\\
            \vdots & \vdots & \ddots & \vdots\\
            0 & 2^{2p-2} & \cdots & p^{2p-2}
        \end{pmatrix}
        &=-a_0(p!)^2\det\begin{pmatrix}
            1 & \cdots & 1\\
            2^2 & \cdots & p^2\\
            \vdots & \ddots & \vdots\\
            2^{2p-4} & \cdots & p^{2p-4}
        \end{pmatrix}\\
        &=-a_0(p!)^2\prod_{2\le n<m\le p}(m^2-n^2).
    \end{align*}
    Moreover, we have
    \begin{align*}
        \det\begin{pmatrix}
				1&1&\cdots&\overbrace{-a_0}^{\text{$k$th column}}&\cdots&1\\
				1&2^2&\cdots&0&\cdots&p^2\\
				1&2^4&\cdots&0&\cdots&p^4\\
				\vdots&\vdots&\ddots&\vdots&\cdots&\vdots\\
				1&2^{2p-2}&\cdots&0&\cdots&p^{2p-2}
			\end{pmatrix}=(-1)^{k}a_0\frac{(p!)^2}{k^2}\prod_{\substack{1\le n<m\le p\\ n, m\ne k}}(m^2-n^2).
    \end{align*}
    Then by using \eqref{a0} and performing further calculations, we are done.
\end{proof}

\begin{remark}
    \label{explicita_k}
    There is an alternative way of determining these $a_k$'s.  First, using the half-angle formula, we have
    \begin{align}\label{rmkeq1}
        (1-\cos \beta x)^p=2^p\sin^{2p}\frac{\beta x}{2}.
    \end{align}
    Then, due to Euler's formula and the binomial expansion theorem, we obtain
    \begin{align}\label{rmkeq2}
        \sin^{2p}\frac{\beta x}{2}=\left(\frac{e^{i\beta x/2}-e^{-i\beta x/2}}{2i}\right)^{2p}=\frac{(-1)^p}{2^{2p}}\sum_{m=0}^{2p}(-1)^m\binom{2p}{m}e^{i(p-m)\beta x}.
    \end{align}
    Hence combining \eqref{rmkeq1} and \eqref{rmkeq2} gives
    \[(1-\cos\beta x)^p=\frac{(-1)^p}{2^p}\sum_{m=0}^{2p}(-1)^m\binom{2p}{m}e^{i(p-m)\beta x}.\]
    Finally, since $e^{i\beta kx}+e^{-ik\beta x}=2\cos k\beta x$, pairing the terms $m$ and $2p-m$ produces
    \[(1-\cos \beta x)^p=\frac{1}{2^p}\binom{2p}{p}+\sum_{k=1}^p\frac{(-1)^k}{2^{p-1}}\binom{2p}{p-k}\cos k\beta x.\]
    Therefore we find
    \[a_0=\frac{1}{2^p}\binom{2p}{p}\quad\text{and}\quad a_k=\frac{(-1)^k}{2^{p-1}}\binom{2p}{p-k}\quad\text{for } 1\le k\le p\]
    as desired.
\end{remark}

Indeed, by comparing \eqref{1-cosx}, Theorem \ref{rmk5.3}, and Remark \ref{explicita_k}, we yield several new combinatorial identities.
    Also, another way to view the coefficients $a_k$'s is by using the Fourier transformation. In this case, we will have
    \begin{align*}
        a_k=\begin{cases}
            \displaystyle\frac{1}{2\pi}\int_0^{2\pi}(1-\cos x)^pdx &\text{if } k=0,\\
            \\
            \displaystyle\frac{1}{\pi}\int_0^{2\pi}(1-\cos x)^p\cos kxdx &\text{if } 1\le k\le p.
        \end{cases}
    \end{align*}
    Comparing this representation with what have been discussed also provides a relationship between the analytic integrals and the corresponding combinatorial expressions.

Let us conclude this section by computing $\mathcal{I}_{p, q}(0, \beta)$ explicitly in a considerably simple form as shown below.

\begin{corollary}
    Let $p, q\in\mathbb{N}$ such that $2\leq q\leq 2p$. Let $\beta\in\mathbb{R}$. 
    \begin{enumerate}
        \item [(a)] If $q$ is even, then
        \[\mathcal{I}_{p, q}(0, \beta)=\displaystyle \frac{\pi(-1)^{q/2}|\beta|^{q-1}}{2^{p}(q-1)!}\sum_{k=1}^p(-1)^k\binom{2p}{p-k}k^{q-1}.\]
        \item [(b)] If $q$ is odd, then
        \[\mathcal{I}_{p, q}(0, \beta)=\displaystyle\frac{(-1)^{\frac{q-1}{2}}|\beta|^{q-1}}{2^{p-1}(q-1)!}\sum_{k=1}^{p-1}(-1)^k\binom{2p}{p-k}k^{q-1}\log\frac{p}{k}.\]
    \end{enumerate}
\end{corollary}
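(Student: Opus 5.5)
We apply Theorem \ref{mainthmcompute} in the special case $\alpha=0$. The expansion of $\mathcal{J}$ is replaced by the much simpler expansion \eqref{anotherwaytowrite}, whose coefficients are given explicitly in Remark \ref{explicita_k}. Since cosine is even, $\mathcal{I}_{p,q}(0,\beta)=\mathcal{I}_{p,q}(0,|\beta|)$. The substitution $x\mapsto x/|\beta|$ gives $\mathcal{I}_{p,q}(0,\beta)=|\beta|^{q-1}\mathcal{I}_{p,q}(0,1)$, so it suffices to take $\beta=1$. Convergence for $2\le q\le 2p$ is guaranteed by Theorem \ref{compar}. We then write
\[(1-\cos x)^p=\sum_{k=0}^p a_k\cos kx,\qquad a_k=\frac{(-1)^k}{2^{p-1}}\binom{2p}{p-k}\quad (1\le k\le p).\]
The function $(1-\cos x)^p$ is smooth, has a zero of order $2p$ at the origin, and all its derivatives are bounded. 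Hence Lemma \ref{l4} applies with $\ell=q\le 2p$, giving
\[\mathcal{I}_{p,q}(0,1)=\frac{1}{(q-1)!}\int_0^\infty \frac{1}{x}\,\frac{d^{q-1}}{dx^{q-1}}(1-\cos x)^p\,dx,\]
provided the last integral exists. We verify its existence in each parity case below.

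For $q$ even, the $(q-1)$th derivative equals $(-1)^{q/2}\sum_{k=1}^p k^{q-1}a_k\sin kx$. The constant term $a_0$ disappears under differentiation. By Lemma \ref{basic1}, each integral $\int_0^\infty \sin(kx)/x\,dx$ equals $\pi/2$. Substituting $a_k$, the factor $\frac{1}{2^{p-1}}\cdot\frac{\pi}{2}$ becomes $\frac{\pi}{2^p}$, which yields (a).

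For $q$ odd, the $(q-1)$th derivative equals $(-1)^{(q-1)/2}\sum_{k=1}^p k^{q-1}a_k\cos kx$. Its value at $0$ is $(-1)^{(q-1)/2}\sum_{k=1}^p k^{q-1}a_k$. This vanishes because $q-1$ is even with $2\le q-1\le 2p-1$, hence $q-1\le 2p-2$, so Lemma \ref{diffmanytimes} applies. (Note $q=1$ is excluded.) Setting $c_k:=k^{q-1}a_k$, we have $\sum_{k=1}^p c_k=0$. We then rewrite
\[\sum_{k=1}^p c_k\cos kx=\sum_{k=1}^{p-1}c_k(\cos kx-\cos px).\]
This is the step that replaces the Abel summation of Theorem \ref{mainthmcompute}(b) with a cleaner telescoping against the top frequency. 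Each term is a Frullani integral, and Lemma \ref{basic1} gives $\int_0^\infty(\cos kx-\cos px)/x\,dx=\log(p/k)$. This also establishes the existence required by Lemma \ref{l4}. Collecting constants, $\frac{1}{2^{p-1}}$ comes from $a_k$, and multiplying by $|\beta|^{q-1}$ yields (b). The $k=p$ term is absent because $\log(p/p)=0$.

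The only genuinely delicate point is justifying the use of Lemma \ref{l4} in the odd case, since the integral of $f^{(q-1)}(x)/x$ converges only because the constant term at $0$ cancels. That cancellation is exactly what Lemma \ref{diffmanytimes} supplies. The remaining work is bookkeeping of signs and powers of $2$.
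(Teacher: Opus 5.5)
Your proposal is correct and follows essentially the same route as the paper, which simply reruns the argument of Theorem~\ref{mainthmcompute} (Lemma~\ref{l4} followed by Lemma~\ref{basic1}) using the explicit coefficients from Remark~\ref{explicita_k}. In part (b), your telescoping against the top frequency $\cos px$ is just a regrouping of the Abel summation used there: with frequencies ordered $1,\dots,p$, summing $\Xi_n\log\frac{n+1}{n}$ over $n\ge k$ gives $\log\frac{p}{k}$, which is exactly the single sum in the statement.
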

The proof idea is the same as in Theorem \ref{mainthmcompute}, but now the coefficients are much simpler due to Remark \ref{explicita_k}, and so each expression only consists of a single sum that runs over a single index. These forms are relatively much more elementary compared to Corollary 4.4 of \cite{AL}.



\section{Frullani-type integral for difference of sines}\label{sec6}

For the ease of reference, let us make the following definitions.
Let $p, q\in\mathbb{N}$ and $\alpha, \beta\in\mathbb{R}$ such that $|\alpha|\ne|\beta|$. Denote
\[\mathcal{K}_{p, q}(\alpha, \beta):=\int_0^{\infty}h_{p, q}[\alpha, \beta](x)dx,\]
where
\[h_{p, q}[\alpha, \beta](x):=\frac{(\sin \alpha x-\sin \beta x)^p}{x^q}.\]
In this final section, following the same line of reasoning as before, we determine the convergence classification for $\mathcal{K}_{p, q}(\alpha, \beta)$. Since the arguments are nearly identical, we will provide only sketch proofs for each result. The explicit evaluation can be derived using the same method.

To begin, we need to expand
\[
\mathcal{L}(x) := (\sin \alpha x - \sin \beta x)^p
\]
as a sum of unpowered sine or cosine terms. Recall the following result before proceeding further.

\begin{lemma}\label{lem6.1}(\cite{AL}, Corollary 3.2)
	Let $n\in\mathbb{N}$. For $x\in\mathbb{R}$, we have
	\begin{align*}
		\sin^nx=\begin{cases}
			\displaystyle\frac{(-1)^{\frac{n(n+1)}{2}+1}}{2^{n-1}}\sum_{k=0}^{n-1}(-1)^k\binom{n-1}{k}\sin((n-2k)x) &\text{if $n$ is odd},\\\\
			\displaystyle\frac{(-1)^{\frac{n(n+1)}{2}}}{2^{n-1}}\sum_{k=0}^{n-1}(-1)^k\binom{n-1}{k}\cos((n-2k)x) &\text{if $n$ is even}.
		\end{cases}
	\end{align*}
\end{lemma}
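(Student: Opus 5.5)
The plan is to deduce the formula directly from the cosine power-reduction formula of Lemma~\ref{long}, via the phase shift $\sin x=\cos(x-\pi/2)$. Put $y:=x-\pi/2$. Lemma~\ref{long} gives
\[
\sin^n x=\cos^n y=\frac{1}{2^{n-1}}\sum_{k=0}^{n-1}\binom{n-1}{k}\cos\bigl((n-2k)x-(n-2k)\tfrac{\pi}{2}\bigr).
\]
So it remains to rewrite each summand $\cos(mx-m\pi/2)$, where $m:=n-2k$, as a signed sine or cosine of $mx$. Note that $m$ always has the same parity as $n$.

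First step: apply the subtraction formula $\cos(mx-m\pi/2)=\cos(mx)\cos(m\pi/2)+\sin(mx)\sin(m\pi/2)$, splitting by the parity of $n$.
\begin{itemize}
\item If $n$ is odd, then $m$ is odd. Hence $\cos(m\pi/2)=0$ and $\sin(m\pi/2)=(-1)^{(m-1)/2}=(-1)^{(n-1)/2}(-1)^{k}$, so the summand equals $(-1)^{(n-1)/2}(-1)^k\sin(mx)$.
\item If $n$ is even, then $m$ is even. Hence $\sin(m\pi/2)=0$ and $\cos(m\pi/2)=(-1)^{m/2}=(-1)^{n/2}(-1)^{k}$, so the summand equals $(-1)^{n/2}(-1)^k\cos(mx)$.
\end{itemize}
The factor $(-1)^k$ stays inside the sum. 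The $k$-independent sign factors out, giving exactly the shape of the claimed formulas.

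Second step, which is the only real check: match the global signs. This amounts to two congruences modulo $2$.

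For odd $n=2j+1$, we need $\frac{n(n+1)}{2}+1\equiv\frac{n-1}{2}$. The difference is
\[
\frac{n(n+1)}{2}+1-\frac{n-1}{2}=\frac{n^2+3}{2}=2(j^2+j+1),
\]
which is even.

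For even $n=2j$, we need $\frac{n(n+1)}{2}\equiv\frac{n}{2}$. The difference is $\frac{n^2}{2}=2j^2$, which is also even.

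Hence in both cases the prefactor $(-1)^{(n-1)/2}$, respectively $(-1)^{n/2}$, agrees with the stated $(-1)^{\frac{n(n+1)}{2}+1}$, respectively $(-1)^{\frac{n(n+1)}{2}}$.

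I expect no genuine obstacle. The only point needing care is this sign bookkeeping: keeping $(-1)^k$ inside the sum while pulling the $n$-dependent part outside, and confirming the exponent identities above.
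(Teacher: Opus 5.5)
Your argument is correct. The paper gives no proof of this lemma; it imports it from \cite{AL} (Corollary 3.2), the same source as the cosine formula in Lemma~\ref{long}. So you are supplying a derivation the text omits, not reproducing one.

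Your route reduces the sine formula to Lemma~\ref{long} via $\sin x=\cos(x-\pi/2)$. Everything then comes down to evaluating $\cos(m\pi/2)$ and $\sin(m\pi/2)$ and checking two parity congruences. You did both correctly, and the formula checks out for $n=2,3$: $\sin^2x=(1-\cos 2x)/2$ and $\sin^3x=(3\sin x-\sin 3x)/4$.

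One point deserves an explicit sentence. For $k>n/2$ the frequency $m=n-2k$ is negative, so you need $\sin(m\pi/2)=(-1)^{(m-1)/2}$ and $\cos(m\pi/2)=(-1)^{m/2}$ for all integers $m$ of the relevant parity, not only positive ones. They do hold, for example $\sin(-\pi/2)=-1=(-1)^{-1}$. Then $(-1)^{-k}=(-1)^k$ gives the factorisation you wrote.

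The more common route is Euler's formula plus the binomial theorem, which the paper uses in Remark~\ref{explicita_k} for $\sin^{2p}$. That route produces coefficients $\binom{n}{j}/2^{n}$. It then needs Pascal's rule $\binom{n}{j}=\binom{n-1}{j}+\binom{n-1}{j-1}$ and the reindexing $j\mapsto n-j$ to fold into the stated $\binom{n-1}{k}/2^{n-1}$ form.

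Your phase-shift argument inherits that normalisation directly from Lemma~\ref{long}, leaving only sign bookkeeping. The trade-off is that it is only as self-contained as Lemma~\ref{long}, which the paper also takes on citation. That lemma can itself be checked by exactly the Pascal-rule folding just described.
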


\subsection{Case $q=1$}

Due to the nature of the sine function, it is in fact easier to study $\mathcal{K}_{p, q}(\alpha, \beta)$ in this setting.

\begin{theorem}
    Let $p\in\mathbb{N}$ and $\alpha, \beta\in\mathbb{R}$ such that $|\alpha|\ne|\beta|$. Then $\mathcal{K}_{p, 1}(\alpha, \beta)$ converges if and only if $p$ is odd.
\end{theorem}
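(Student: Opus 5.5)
The plan is to treat the two parities of $p$ separately, using the parity of the integrand near $x=0$ and the trigonometric expansion of $\mathcal{L}(x)$ from Lemma \ref{lem6.1} combined with the binomial theorem. Near the origin there is no difficulty in either case: $\sin\alpha x-\sin\beta x=(\alpha-\beta)x+O(x^3)$, so $h_{p,1}[\alpha,\beta](x)=O(x^{p-1})$ is bounded as $x\to0$. Convergence is therefore decided entirely by the behaviour at infinity, i.e.\ by $\int_1^\infty \mathcal{L}(x)/x\,dx$.

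\emph{Odd $p$.} Expanding $(\sin\alpha x-\sin\beta x)^p=\sum_k\binom{p}{k}(-1)^k\sin^{p-k}\alpha x\sin^k\beta x$, each product $\sin^{p-k}\alpha x\sin^{k}\beta x$ has total degree $p$ odd. Exactly one of the two powers is odd, so by Lemma \ref{lem6.1} the product is (odd-degree sine sum) times (even-degree cosine sum). By the product-to-sum formula $\sin A\cos B=\tfrac12(\sin(A+B)+\sin(A-B))$, this is a finite linear combination $\sum_j c_j\sin(e_jx)$. Equivalently, and more cleanly, $\mathcal{L}$ is an odd trigonometric polynomial, so it has no cosine terms at all. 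Hence
\[\int_0^\infty \frac{\mathcal{L}(x)}{x}\,dx=\sum_j c_j\int_0^\infty\frac{\sin e_jx}{x}\,dx=\frac{\pi}{2}\sum_j c_j\,\mathrm{sgn}(e_j)\]
by Lemma \ref{basic1}. Any terms with $e_j=0$ vanish identically, so there is no constant-term obstruction, in contrast with the cosine case. Splitting the integral into finitely many separately convergent pieces is legitimate, and convergence follows.

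\emph{Even $p$.} Now $\mathcal{L}$ is even, so by the same expansion $\mathcal{L}(x)=\sum_{k=0}^t a_k\cos d_kx$ with $d_0=0$ and distinct nonzero $d_k$. Following the proof of Theorem \ref{thm3.1}, the key step is to show $a_0\neq0$. Then $\int_1^R\mathcal{L}(x)/x\,dx=a_0\log R+O(1)$, because each $\int_1^\infty\cos(d_kx)/x\,dx$ converges for $d_k\ne0$, and this forces divergence.

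To prove $a_0\neq0$, I argue by contradiction as in Theorem \ref{thm3.1}. If $a_0=0$, then $\int_0^R\mathcal{L}=\sum_k a_k\sin(d_kR)/d_k$ is bounded in $R$. On the other hand, $\mathcal{L}=(\sin\alpha x-\sin\beta x)^p\ge0$ because $p$ is even, and it is a nonzero almost periodic function. Concretely, $a_0=\lim_{R\to\infty}\frac1R\int_0^R\mathcal{L}$ is the mean value $M(\mathcal{L})$, and a nonnegative trigonometric polynomial that is not identically zero has positive mean, since its mean equals the sum of the squares of the moduli of the Fourier coefficients of $(\sin\alpha x-\sin\beta x)^{p/2}$. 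This Parseval-type identity for trigonometric polynomials is the cleanest route and avoids the Weyl-criterion argument. Since $|\alpha|\ne|\beta|$, the function $\sin\alpha x-\sin\beta x$ is not identically zero, so $a_0>0$.

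The main obstacle is this positivity of the constant term for even $p$. I expect the Parseval argument to settle it directly, with the density argument of Theorem \ref{thm3.1} adapted to sines as a fallback. The odd case is then routine bookkeeping.
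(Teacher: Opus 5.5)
Your proof is correct. The odd case is essentially the paper's argument: the paper writes $\mathcal{L}=a_0+\sum a_k\sin d_kx$ and kills $a_0$ by setting $x=0$, and your oddness observation does the same job. The even case differs in its key step.

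\textbf{The paper's route.} It simply asserts $b_0\neq0$, by analogy with Theorem~\ref{thm3.1}. There the nonvanishing of the constant term comes from showing $\int_0^R\mathcal{J}\to\infty$, using periodicity when the frequency ratio is rational and a Weyl-type density argument otherwise. It then uses Abel summation to rewrite the integral as $\sum_k S_k\,\mathcal{I}_{1,1}(d_k,d_{k+1})$, in which only the term $S_0\,\mathcal{I}_{1,1}(0,d_1)$ diverges.

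\textbf{Your route.} You identify $a_0$ with the mean value $M(\mathcal{L})$ and write $\mathcal{L}=g^2$, where $g=(\sin\alpha x-\sin\beta x)^{p/2}=\sum_\lambda c_\lambda e^{i\lambda x}$ is a finite sum with distinct real frequencies after collecting like terms. Since $M(e^{i\nu x})=0$ for $\nu\neq0$, this gives $a_0=\sum_\lambda|c_\lambda|^2$. This is positive because $g\not\equiv0$ when $|\alpha|\neq|\beta|$. Divergence then follows from $\int_1^R\mathcal{L}(x)/x\,dx=a_0\log R+O(1)$, which has the same content as the Abel summation but is stated directly.

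\textbf{What each buys.} Your approach proves $a_0>0$ uniformly, with no rational/irrational split. It also avoids having to adapt the density argument to sines, which the paper never actually carries out, and it yields an explicit formula for $a_0$. The paper's Abel-summation framing fits its evaluation machinery, since the same rewriting produces the closed forms in the convergent cases. A small stylistic point: you can drop the contradiction framing in your even case, because the Parseval identity gives $a_0>0$ outright.
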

\begin{proof}
    Using Lemma \ref{lem6.1}, the product-to-sum formulas, and the binomial expansion theorem, one can show that when $p$ is odd, $\mathcal{L}$ can be expressed as a sum of unpowered sine terms:
\begin{equation}\label{eq6.1}
\mathcal{L}(x) = a_0 + \sum_{k=1}^{t_1} a_k \sin d_k x,
\end{equation}
for some $t_1 \in \mathbb{N}$, where $a_0 \in \mathbb{R}$ and $a_k, d_k \in \mathbb{R} \setminus \{0\}$ are distinct for $k = 1, \ldots, t_1$.
On the other hand, if $p$ is even, $\mathcal{L}$ is a sum of unpowered cosine terms:
\begin{equation}\label{eq6.2}
\mathcal{L}(x) = b_0 + \sum_{k=1}^{t_2} b_k \cos e_k x ,
\end{equation}
with analogous assumptions on the coefficients and frequencies.

Now, in the case where $p$ is odd, substituting $x = 0$ into \eqref{eq6.1} yields $a_0 = 0$, and hence
\[
\mathcal{K}_{p, 1}(\alpha, \beta)
= \sum_{k=1}^{t_1} a_k \int_0^{\infty} \frac{\sin d_k x}{x}\, dx
= \sum_{k=1}^{t_1} \frac{\pi a_k}{2} \operatorname{sgn}(d_k)
< \infty.
\]
This shows that $\mathcal{K}_{p, 1}(\alpha, \beta)$ converges.
For the even $p$ case, one may use an Abel summation argument similar to that in Theorem \ref{thm3.1}. Noting that $b_0\ne 0$ and $\sum_{k=0}^{t_2} b_k = 0$, we can then establish the divergence of $\mathcal{K}_{p, 1}(\alpha, \beta)$.
\end{proof}

For the exact computation (which is left as an exercise), one must carefully track the coefficients $a_k$ and the signs of $d_k$.

\subsection{Cases $q\ge 2$}

Before proceeding to the result, we state a useful lemma.

\begin{lemma}\label{lem6.3}
Let $\alpha\in[-1,1)$ and $p, q\in\mathbb{N}$ with $q>p>0$.
\begin{enumerate}
    \item [(a)] If $p$ is even, then $h_{p, q}[\alpha, 1](x)$ is positive and decreasing on $(0, \pi/2]$.
    \item [(b)] If $p$ is odd, then $h_{p, q}[\alpha, 1](x)$ is negative and increasing on $(0, \pi/2]$. 
\end{enumerate} 
\end{lemma}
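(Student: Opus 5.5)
The plan is to imitate the proof of Lemma \ref{lem.dec}, with the role of $(\cos\alpha x-\cos x)/x^2$ played by
\[
G(x):=\frac{\sin x-\sin\alpha x}{x},\qquad x\in(0,\pi/2].
\]
The point is the factorisation
\[
h_{p,q}[\alpha,1](x)=(-1)^p\,\frac{G(x)^p}{x^{q-p}},
\]
which holds because $(\sin\alpha x-\sin x)^p=(-1)^p(\sin x-\sin\alpha x)^p$.

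It therefore suffices to show that $G$ is positive and nonincreasing on $(0,\pi/2]$. Granting this, $G^p$ is positive and nonincreasing. Since $q>p$, we have $q-p\ge 1$, so $1/x^{q-p}$ is positive and strictly decreasing on $(0,\pi/2]$. The product $G^p/x^{q-p}$ is then positive and decreasing. Multiplying by $(-1)^p$ gives (a) when $p$ is even and (b) when $p$ is odd.

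\emph{Positivity of $G$.} For $x\in(0,\pi/2]$ and $\alpha\in[-1,1)$ we have $-\pi/2\le -x\le\alpha x<x\le\pi/2$. Sine is strictly increasing on $[-\pi/2,\pi/2]$, so $\sin\alpha x<\sin x$, hence $G(x)>0$.

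\emph{Monotonicity of $G$.} Compute
\[
G'(x)=\frac{\varphi(x)-\varphi(\alpha x)}{x^2},\qquad \varphi(t):=t\cos t-\sin t,
\]
where we used $\frac{d}{dx}\sin\alpha x=\alpha\cos\alpha x$, so that $x\cdot\alpha\cos\alpha x-\sin\alpha x=\varphi(\alpha x)$. Now $\varphi'(t)=-t\sin t\le 0$ on $[-\pi/2,\pi/2]$, because $t\sin t\ge 0$ there. So $\varphi$ is nonincreasing on the whole symmetric interval. Since $\alpha x\le x$ and both points lie in $[-\pi/2,\pi/2]$, we get $\varphi(x)\le\varphi(\alpha x)$, i.e.\ $G'(x)\le 0$. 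This step plays the role that Lemma \ref{basiclem} played for cosines.

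The only point needing care is negative $\alpha$, since $\alpha x$ then leaves $[0,\pi/2]$. The argument above handles it uniformly because $\varphi$ is monotone on all of $[-\pi/2,\pi/2]$, not just on $[0,\pi/2]$. The excluded value $\alpha=1$ is exactly the one where $G\equiv 0$. No other obstacle is expected.
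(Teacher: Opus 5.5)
Your proof is correct and follows the paper's route. The paper writes $h_{p,q}[\alpha,1]=\xi^p x^{p-q}$ with $\xi=(\sin\alpha x-\sin x)/x=-G$ negative and increasing, and reduces $\xi'\ge 0$ to exactly your inequality $\varphi(\alpha x)\ge\varphi(x)$ but omits its proof; your observation that $\varphi(t)=t\cos t-\sin t$ has $\varphi'(t)=-t\sin t\le 0$ on all of $[-\pi/2,\pi/2]$ supplies that missing step cleanly, including for negative $\alpha$.
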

\begin{proof}
We begin by observing that, over $(0, \pi/2]$, the function
\[\xi(x):=\frac{\sin \alpha x-\sin x}{x}\]
is negative and increasing. This can be shown by applying the inequality \[\alpha x\cos\alpha x+\sin x\ge x\cos x+\sin\alpha x\] to $\xi'$. The proof here will be omitted.
Then we observe that $h_{p, q}[\alpha, 1](x)=\xi^{p}x^{p-q}$. So the assertion follows from the parity of $p$.
\end{proof}

\begin{theorem}
   Let $p, q\in\mathbb{N}$ and $\alpha, \beta\in\mathbb{R}$ such that $q\ge 2$ and $|\alpha|\ne|\beta|$. Then $\mathcal{K}_{p, q}(\alpha, \beta)$ exists if and only if $q\le p$.
\end{theorem}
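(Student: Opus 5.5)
The plan mirrors the cosine case (Theorems \ref{divq>=2} and \ref{compar}), with the threshold $2p$ replaced by $p$. The reason is that $\sin\alpha x-\sin\beta x=(\alpha-\beta)x+O(x^3)$ vanishes only to first order at $x=0$. I would first reduce to $\beta=1$ using the substitution rule; the substitution $x\mapsto x/|\beta|$ only multiplies $\mathcal{K}_{p,q}$ by a nonzero constant. When $\beta<0$, I would use that sine is odd to flip the signs of both $\alpha$ and $\beta$, which changes $\mathcal{L}$ by the factor $(-1)^p$. If $|\alpha|>1$, I would swap the roles of $\alpha$ and $\beta$ and rescale, so that $\beta=1$ and $\alpha\in(-1,1)$. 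This is exactly the range covered by Lemma \ref{lem6.3}.

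\emph{Sufficiency ($2\le q\le p$).} I would split the integral at $x=1$. Near $0$, the Taylor expansion gives
\[(\sin\alpha x-\sin x)^p=(\alpha-1)^px^p+O(x^{p+2}).\]
Hence $h_{p,q}[\alpha,1](x)=O(x^{p-q})=O(1)$ as $x\to0$, which is the analogue of Lemma \ref{boundary0}, so $\int_0^1$ converges. On $[1,\infty)$ we have $|h_{p,q}|\le 2^p/x^q$ with $q\ge2$, so the comparison test gives convergence. This is identical to the proof of Theorem \ref{compar}.

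\emph{Necessity ($q\ge p+1$).} Here I would show that $\int_0^{1}h_{p,q}[\alpha,1]$ diverges; the tail on $[1,\infty)$ still converges, so the whole integral diverges. By Lemma \ref{lem6.3}, $h_{p,q}[\alpha,1]$ has constant sign and is monotone on $(0,\pi/2]$. After multiplying by $(-1)^p$ if needed, it is positive and decreasing. As in Theorem \ref{divq>=2}, on each interval $[1/(k+1),1/k]$ I would bound the integral below by its value at $1/k$ times $1/(k(k+1))$. This gives partial integrals bounded below by
\[\sum_{n\le k}\Bigl|\sin\tfrac{\alpha}{n}-\sin\tfrac1n\Bigr|^p\frac{n^{q-1}}{n+1}.\]
Since $n(\sin(\alpha/n)-\sin(1/n))\to\alpha-1\ne0$, the general term behaves like $|1-\alpha|^pn^{q-p-1}$. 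For $q=p+1$, the limit comparison test with $\sum 1/n$ gives divergence. For $q\ge p+2$, the terms do not tend to zero, so the series diverges. Alternatively, one can skip the monotonicity and argue directly: near $0$, $|h_{p,q}|\ge c\,x^{p-q}$ with constant sign, and $\int_0 x^{p-q}\,dx$ diverges for $q\ge p+1$.

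The main subtlety is the reduction step. The case $\alpha=-1$ is excluded by $|\alpha|\ne|\beta|$, and Lemma \ref{lem6.3} indeed covers all $\alpha\in(-1,1)$. The sign bookkeeping under the symmetries of sine, which only introduces harmless factors $\pm1$, also needs care. Otherwise the argument is routine.
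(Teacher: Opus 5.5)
Your proposal is correct and follows the paper's proof essentially step for step. For sufficiency you split at $x=1$ and use $(\sin\alpha x-\sin\beta x)^p=(\alpha-\beta)^px^p+O(x^{p+2})$; for necessity you reduce to $\beta=1$, invoke Lemma \ref{lem6.3}, and bound $\int_{1/(k+1)}^1 h_{p,q}$ below by the same series. Your closing remark that $|h_{p,q}|\ge c\,x^{p-q}$ with constant sign near $0$ is a shorter route that avoids Lemma \ref{lem6.3} altogether. One arithmetic slip: since $n^{q-1}/(n+1)\sim n^{q-2}$, the general term behaves like $|1-\alpha|^p n^{q-p-2}$, not $n^{q-p-1}$. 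The exponent $q-p-2$ is the one your case split (comparison with $\sum 1/n$ for $q=p+1$, non-vanishing terms for $q\ge p+2$) actually relies on, so the conclusion is unaffected.
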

\begin{proof}
	($\Leftarrow$) We may follow the same trick as in Theorem \ref{compar}. That is, we split the integral into two disjoint parts: \[\mathcal{K}_{p, q}(\alpha, \beta)=\int_0^1 h_{p, q}[\alpha, \beta](x)dx+\int_1^{\infty} h_{p, q}[\alpha, \beta](x)dx.\] The second one trivially converges due to the comparison test. For the first integral, we only need to observe that
	$(\sin \alpha x-\sin \beta x)^p=(\alpha-\beta)^px^p+O(x^{p+2})$,
	and then show that $\lim_{x\to 0}h_{p, q}[\alpha, \beta](x)$ exists for $2\le q\le p$.

    ($\Rightarrow$) It suffices to consider when $\alpha\in[-1, 1)$ and $\beta=1$ by the substitution rule. Our goal is to show that $\int_0^1h$ diverges. There are two cases to consider: $p$ is even and $p$ is odd. However, they appear to be similar (just that when $p$ is odd, we instead consider $|h|$). Hence let us only consider when $p$ is even. Similar to Theorem \ref{divq>=2}, by Lemma \ref{lem6.3}, we observe that
    \[\int_{\frac{1}{k+1}}^{\frac{1}{k}}h_{p, q}[\alpha, \beta](x)dx\ge \left(\sin\left(\frac{\alpha}{k}\right)-\sin\left(\frac{1}{k}\right)\right)^p\frac{k^{q-1}}{k+1}>0.\]
    So we have
    \[A_k:=\int_{\frac{1}{k+1}}^1h_{p, q}[\alpha, \beta](x)dx\ge\sum_{n=1}^k\left(\sin\left(\frac{\alpha}{n}\right)-\sin\left(\frac{1}{n}\right)\right)^p\frac{n^{q-1}}{n+1}>0.\]
    Then one can finally show (using the divergence test or the limit comparison test) that the sum on the right-hand side diverges as $k\to\infty$.
\end{proof}

\subsection{Final remark}

Of course, one might also ask whether there exists a direct relationship between the two families $\mathcal{I}_{p, q}(\alpha, \beta)$ and $\mathcal{K}_{p, q}(\alpha, \beta)$. This question, however, is not easy to answer. One possible reason is that the parameter ranges $(p, q; \alpha, \beta)$ ensuring the convergence of both families are not exactly the same. Another promising direction is to study integrals involving products of sine and cosine differences, such as
\[\int_0^{\infty}\frac{(\cos\alpha x-\cos \beta x)^p(\sin\gamma x-\sin\delta x)^r}{x^q}dx.\]
Understanding this may reveal deeper connections between the two frameworks. 
These questions are left for future investigation.

\section*{Acknowledgements}

We thank Weerapat Li for revising the introduction of this paper. We also thank the referees for their constructive comments, which significantly improved the manuscript. The  first author is funded by Kamnoetvidya Science Academy, while the second author is funded by the Philip K. H. Wong Foundations Scholarship at the University of Hong Kong.

\section*{Conflict of interest}

The authors declare no conflict of interest.

\end{document}